\newtheorem{theorem}{Theorem}[section]
\newtheorem{lemma}[theorem]{Lemma}
\theoremstyle{definition}
\theoremstyle{remark}
\newtheorem{proposition}[theorem]{Proposition}
\numberwithin{equation}{section}
\numberwithin{equation}{section}
\numberwithin{equation}{section}
\def\bzero{{\mathbf 0}}
 \def\bC{{\mathbf C}}  
\def\bF{{\mathbf F}} \def\bS{{\mathbf S}}
 \def\bS{{\mathbf S}}  
  \def\bX{{\mathbf X}} \def\bY{{\mathbf Y}}
\def\bx{{\mathbf x}} \def\by{{\mathbf y}} 
\def\bbeta{{\boldsymbol{\beta}}}
 \def\bdelta{{\boldsymbol{\delta}}}
\def\bPhi{\boldsymbol{\Phi }}
  \def\bveps{{\boldsymbol{\varepsilon}}}
\def\hbbeta{\widehat{\boldsymbol \beta}}
 \def\hbeta{\widehat{\beta}}
\def\real{\mathop{{\rm I}\kern-.2em\hbox{\rm R}}\nolimits}
\def\1overn{\frac{1}{n}}
\def\bel{\begin{eqnarray}\label}  \def\eel{\end{eqnarray}}
\def\bes{\begin{eqnarray*}}  \def\ees{\end{eqnarray*}}
\begin{document}

% \title[short text for running head]{full title}
\title{Liu-type Shrinkage Estimations in Linear Models}

%    author one information
% \author[short version for running head]{name for top of paper}

\author{Bahad{\i}r Y\"{u}zba\c{s}{\i}}
\author{Yasin Asar}
\author{S. Ejaz Ahmed}

\address[Bahad{\i}r Y\"{u}zba\c{s}{\i}]{Department of Econometrics, Inonu University, Malatya, Turkey}
\email{b.yzb@hotmail.com}

\address[Yasin Asar]{Department of Mathematics-computer Sciences, Necmettin Erbakan University, Konya, Turkey }
\email{yasar@konya.edu.tr, yasinasar@hotmail.com }

\address[S. Ejaz Ahmed]{Department of Mathematics and Statistics,
Brock University, St. Catharines, Canada}
\email{sahmed5@brocku.ca}

\subjclass[2010]{62J05, 62J07}
\date{}

\begin{abstract}

In this study, we present the preliminary test, Stein-type and positive part Liu estimators in the linear models when the parameter vector $\bbeta$ is partitioned into two parts, namely, the main effects $\bbeta_1$ and the nuisance effects $\bbeta_2$ such that $\bbeta=\left(\bbeta_1, \bbeta_2 \right)$. We consider the case that a priori known or suspected set of the explanatory variables do not contribute to predict the response so that a sub-model may be enough for this purpose. Thus, the main interest is to estimate $\bbeta_1$ when $\bbeta_2$ is close to zero. Therefore, we conduct a Monte Carlo simulation study to evaluate the relative efficiency of the suggested estimators, where we demonstrate the superiority of the proposed estimators. \\
\medskip

\noindent \text{Keywords:} Liu Estimation, Pretest and Shrinkage Estimation, Penalty Estimation, Asymptotic and Simulation.

\end{abstract}

\maketitle

\section{Introduction}
Consider a linear regression model
\begin{equation}
y_{i}=\bx_{i}^{\top}\bm{\beta }+\varepsilon _{i},\ \ \
i=1,2,...,n,  \label{lin.mod}
\end{equation}%
where $y_{i}$'s are responses, $\bx_{i}=\left(
x_{i1},x_{i2},...,x_{ip}\right)^{\top}$ are observation points, $%
\bm{\beta }=\left( \beta _{1},\beta _{2},...,\beta _{p}\right)
^{\top}$ is a vector of unknown regression coefficients, $\varepsilon _{i}{}$'$s$
are unobservable random errors and the superscript $\left( ^{\top}\right) $
denotes the transpose of a vector or matrix. Further, $\bm{%
\varepsilon }=\left( \varepsilon _{1},\varepsilon _{2},...,\varepsilon
_{n}\right)^{\top}$ has a cumulative distribution function $\textnormal{F}\left(
\bm{\cdot }\right) $; $\textnormal{E}\left( \bm{\varepsilon }%
\right) =\bm{0}$ and $\textnormal{Var}\left( \bm{\varepsilon }\right)
=\sigma ^{2}\bm{I}_{n}$, where $\sigma ^{2}$ is finite and $%
\bm{I}_{n}$ is an identity matrix of dimension $n\times n.$ In this paper, we consider
that the design matrix has rank $p$ ($p\leq n$).

In a multiple linear regression model, it is usually assumed that the
explanatory variables are independent of each other. However, the multicollinearity problem arises when the explanatory variables are dependent. In this case, some biased estimations, such as shrinkage estimation, principal components
estimation (PCE), ridge estimation (\citet{Ho-Ke1970}), partial least squares (PLS) estimation Liu estimator (\citet{Liu1993}) and 
Liu-type estimator (\citet{Liu2003}) were proposed to improve the
least square estimation (LSE). To combat multicollinearity, \citet{yuzbasi-ahmed2016,yuzbasi-et-al2017} proposed the pretest and Stein-type ridge regression estimators for linear and partially linear models. 

In this study, we consider a linear regression model \eqref{lin.mod}
under the assumption of sparsity. Under this
assumption, the vector of coefficients $\bm{\beta }$ can be
partitioned as $\left( \bm{\beta }_{1},\bm{\beta }%
_{2}\right) $ where $\bm{\beta }_{1}$ is the coefficient vector for
main effects, and $\bm{\beta }_{2}$ is the vector for nuisance
effects or insignificant coefficients. We are essentially interested in the
estimation of $\bm{\beta }_{1}$ when it is reasonable that $%
\bm{\beta }_{2}$ is close to zero. The full model estimation may be subject to high variability and may not be easily interpretable. On the the other hand, a sub-model strategy may result with an under-fitted model with large bias. For this reason, we consider pretest and shrinkage strategy to control the magnitude of the bias. %
Also, \citet{ahmed2014} gave a detailed definition of shrinkage estimation techniques in regression models.  
%, and discussed large sample estimation techniques in a regression model.  
%For more recent work on the subject, we refer to \citet{ahmed-et-al2012} and \citet{ahmed-falla2012,ahmed2014}.

%In this study, we also consider $L_1$ type estimators, and compare them  with pretest and shrinkage estimators. \citet{ahmed-bahadir2015} provided some numerical comparisons of these estimators. 

The paper is organized as follows. The full and sub-model estimators based on Liu regression are given in Section 2. Moreover, the pretest, shrinkage estimators and penalized estimations are also given in this section. The asymptotic properties of the pretest and shrinkage estimators estimators are obtained in Section 3. The design and the results of a Monte Carlo simulation study including a comparison with other penalty estimators are given in Section 4. A real data example is given for illustrative purposes in Section 5.  The concluding remarks are presented in Section 6.

\section{Estimation Strategies}
The ridge estimator firstly proposed by \citet{Ho-Ke1970} can be obtained from the
following model
\begin{equation*}
\label{ridge1}
\bY =\bX\bbeta+\bm{\varepsilon}\ \ \text{%
subject to }\bm{\beta }'\bm{\beta}%
\bm{\leq }\phi,
\end{equation*}%
where $\by=\left( y_{1},\ldots,y_{n}\right)^{\top}$, $\bX=\left( \bx_{1},\ldots,\bx_{n}\right)^{\top}$ and $\phi $ is inversely proportional to $\lambda ^{R}$, which is equal to
\begin{equation*}
\underset{\bm{\beta }}{\arg \min }\left\{
\sum\nolimits_{i=1}^{n}\left( y_{i}-\bx_{i}^{\bm{\top }}%
\bm{\beta }\right) ^{2}+\lambda ^{R}\sum\nolimits_{j=1}^{p}\beta
_{j}^{2}\right\} .  \label{ridge_RSS}
\end{equation*}
It yields%
\begin{equation*}
\bm{\widehat{\beta}}^{\textnormal{RFM}}=\left(\bX^{\top}\bX%
+\lambda ^{R}\bm{I}_{p}\right) ^{-1}\bX^{\top}%
\bY,  \label{ridge_ue}
\end{equation*}%
where $\bm{\widehat{\beta}}^{\textnormal{RFM}}$ is called a ridge full model estimator and $%
\bY = \left( y_{1},y_{2},...,y_{n}\right) ^{\top}$. If $\lambda
^{R}=0,$ then $\bm{\widehat{\beta}}^{\textnormal{RFM}}$ is the LSE estimator, and $%
\lambda ^{R}=\infty ,$ then $\bm{\widehat{\beta}}^{\textnormal{RFM}}=\bm{0%
}.$

\citet{Liu1993} proposed a new biased estimator (LFM) by augmenting $d \bm{\widehat{\beta}}^{\textnormal{LSE}}=\beta+\epsilon'$ to \eqref{lin.mod} such that
\begin{equation*}\label{LFM}
\bm{\widehat{\beta}}^{\textnormal{LFM}}=\left(\bX^{\top}\bX%
+\bm{I}_{p}\right) ^{-1}\left(\bX^{\top}\bX%
+d^{\textnormal{L}}\bm{I}_{p}\right)\bm{\widehat{\beta}}^{\textnormal{LSE}}
\end{equation*} 
where $0<d^{\textnormal{L}}<1$. The advantage of $\bm{\widehat{\beta}}^{\textnormal{LFM}}$ over $\bm{\widehat{\beta}}^{\textnormal{RFM}}$ is that $\bm{\widehat{\beta}}^{\textnormal{LFM}}$ is a linear function of $d^{\textnormal{L}}$ (\cite{Liu1993}). When $d^{\textnormal{L}}=1$, we get $\bm{\widehat{\beta}}^{\textnormal{LFM}}=\bm{\widehat{\beta}}^{\textnormal{LSE}}$. Many researcher has been considered Liu estimator so far. Among them, \citet{Liu2003}, \citet{akdeniz2003}, \citet{Hu-Wi2006}, \citet{Sa-Ki1993} and \citet{Ki2012} are notable.

We let $\bX=\left( \bX_{\bm{1}},%
\bX_{\bm{2}}\right) $, where $\bX_{\bm{%
1}}$ is an $n\times p_{1}$ sub-matrix containing the regressors of interest
and $\bX_{\bm{2}}$ is an $n\times p_{2}$ sub-matrix that
may or may not be relevant in the analysis of the main regressors. Similarly,
$\bm{\beta }=\left( \bm{\beta }_{1}^{\top},\bm{%
\beta }_{2}^{\top}\right) ^{\top}$ be the vector of parameters, where $%
\bm{\beta }_{1}$ and $\bm{\beta }_{2}$ have dimensions $%
p_{1} $ and $p_{2}$, respectively, with $p_{1}+p_{2}=p$, $p_{i}$ $\geq 0$
for $i=1,2$.

A sub-model or restricted model is defined as:
\begin{equation*}
\bY=\bX\bbeta+\bm{\varepsilon }\text{ \ \
subject to }\bm{\beta }^{\top}\bm{\beta }\text{ }%
\leq \phi \text{ and }\bm{\beta_2}=\bold{0},
\label{rid.re.mod}
\end{equation*}%
then we have the following restricted linear regression model
\begin{equation}
\bY=\bX_{1}\bm{\beta }_{1}+\bm{%
\varepsilon }\text{ subject to }\bm{\beta }_{1}^{\top}\bm{%
\beta }_{1}\leq \phi.  \label{res.mod1}
\end{equation}%
We denote $\bm{\widehat{\beta}}_{1}^{\textnormal{RFM}}$ as the full model or
unrestricted ridge estimator of $\bm{\beta }_{1}$ is given by
\begin{equation*}
\bm{\widehat{\beta}}_{1}^{\textnormal{RFM}}=\left( \bX_{1}^{\top}%
\bm{M}_{2}^{R}\bX_{1}+\lambda ^{R}\bm{I}%
_{p_{1}}\right) ^{-1}\bX_{1}^{\top}\bm{M}_{2}^{R}%
\bY,
\end{equation*}%
where $\bm{M}_{2}^{R}=\bm{I}_{n}-\bX_{2}\left(
\bX_{2}^{\top}\bX_{2}+\lambda ^{R}\bm{I}%
_{p_{2}}\right) ^{-1}\bX_{2}^{\top}.$ For model \eqref{res.mod1}, the sub-model or restricted estimator $\bm{\widehat{\beta}}%
_{1}^{\textnormal{RSM}}$ of $\bm{\beta }_{1}$ has the form
\begin{equation*}
\bm{\widehat{\beta}}_{1}^{\textnormal{RSM}}=\left( \bX_{1}^{\top}%
\bX_{1}+\lambda _{1}^{R}\bm{I}_{p_{1}}\right) ^{-1}%
\bX_{1}^{\top}\bY,
\end{equation*}
where $\lambda _{1}^{R}$ is ridge parameter for sub-model estimator
$\bm{\widehat{\beta}}_{1}^{\textnormal{RSM}}$.

Similarly, we introduce the full model estimator or unrestricted Liu estimator $\bm{\widehat{\beta}}_{1}^{\textnormal{LFM}}$ as follows:

\begin{equation*}\label{LFM_1}
\bm{\widehat{\beta}}_{1}^{\textnormal{LFM}}=\left( \bX_{1}^{\top}%
\bm{M}_{2}^{\textnormal{L}}\bX_{1}+\bm{I}%
_{p_{1}}\right) ^{-1}\left( \bX_{1}^{\top}%
\bm{M}_{2}^{\textnormal{L}}\bX_{1}+d^{\textnormal{L}}\bm{I}%
_{p_{1}}\right)\bm{\widehat{\beta}}_{1}^{\textnormal{LSE}},
\end{equation*}%
where $\bm{M}_{2}^{\textnormal{L}}=\bm{I}_{n}-\bX_{2}\left(\bX_{2}'\bX_{2}+\bm{I}_{p_{2}}\right) ^{-1}\left(\bX_{2}^{\top}\bX_{2}+d^{\textnormal{L}}\bm{I}_{p_{2}}\right)\bX_{2}^{\top}.$ The sub-model Liu estimator $\bm{\widehat{\beta}}_{1}^{\textnormal{LSM}}$ is defined as
\begin{equation*}\label{LSM_1}
\bm{\widehat{\beta}}_{1}^{\textnormal{LSM}}=\left( \bX_{1}^{\top}\bX_{1}+\bm{I}%
_{p_{1}}\right) ^{-1}\left( \bX_{1}^{\top}\bX_{1}+d_1^{\textnormal{L}}\bm{I}%
_{p_{1}}\right)\bm{\widehat{\beta}}_{1}^{\textnormal{LSE}}
\end{equation*}%
where $0<d_1^{L}<1$ and $ \bm{\widehat{\beta}}_{1}^{\textnormal{LSE}} = \left(\bX_{1}^{\top}\bX_{1}\right) ^{-1}\bX_{1}^{\top}\bY.$

Generally speaking, $\bm{\widehat{\beta}}_{1}^{\textnormal{LSM}}$ performs better
than $\bm{\widehat{\beta}}_{1}^{\textnormal{LFM}}$ when $\bm{\beta }_{2}$
is close to zero. However, for $\bm{\beta }_{2}$ away from the zero, $%
\bm{\widehat{\beta}}_{1}^{\textnormal{LSM}}$ can be inefficient. But, the
estimate $\bm{\widehat{\beta}}_{1}^{\textnormal{LFM}}$ is consistent for
departure of $\bm{\beta }_{2}$ from zero.

The idea of penalized estimation was introduced by \citet{FF1993}.
They suggested the notion of bridge regression as given in \ref{bridge.mod}.
For a given penalty function $\pi \left( \bm{\cdot }\right) $ and
tuning parameter that controls the amount of shrinkage $\lambda $, bridge
estimators are estimated by minimizing the following penalized least square criterion
\begin{equation}
\sum_{i=1}^{n}\left( y_{i}-\bx_{i}^{\top }\bm{\beta }%
\right) ^{2}+\lambda \pi \left(\beta\right),  \label{bridge.mod}
\end{equation}%
where $\pi \left(\beta\right)$ is $\sum_{j=1}^{p}\left\vert \beta_{j}\right\vert
^{\gamma },\text{ }\gamma >0$. This penalty function
bounds the $L_{\gamma }$ norm of the parameters. 

\subsection{Pretest and Shrinkage Liu Estimation}
The pretest is a combination of $\bm{\widehat{\beta}}_{1}^{\textnormal{LFM}}$
and $\bm{\widehat{\beta}}_{1}^{\textnormal{LSM}}$ through an indicator function $%
I\left( \mathscr{L}_{n}\leq c_{n,\alpha }\right) ,$ where $\mathscr{L}_{n}$
is appropriate test statistic to test $H_{0}:\bm{\beta }_{2} = %
\bm{0}$ versus $H_{A}:\bm{\beta }_{2} \neq \bm{ 0.}$
Moreover, $c_{n,\alpha }$ is an $\alpha -$level critical value using the
distribution of $\mathscr{L}_{n}.$ We define the test statistic as follows:
\begin{equation*}
\mathscr{L}_{n}=\frac{n}{\widehat{\sigma}^{2}}\left( \bm{\widehat{%
\beta}}^{\textnormal{LSE}}_{2}\right) ^{\top}\bX_{2}^{\top}\bm{M}_{1}%
\bX_{2}\left( \bm{\widehat{\beta}}^{\textnormal{LSE}}_{2}\right) ,
\end{equation*}
where
$\widehat{\sigma}^{2}=\frac{1}{n-p}(\bY-\bX\bm{%
\widehat{\beta}}^{\textnormal{LFM}})^{\top}(\bY-\bX\bm{%
\widehat{\beta}}^{\textnormal{LFM}})$ is consistent estimator of $\sigma^{2}$,
$\bm{M}_{1}=\bm{I}_{n}-\bX_{1}\left(
\bX_{1}^{\top}\bX_{1}\right) ^{-1}\bX%
_{1}^{\top}$ and $\bm{\widehat{\beta}}^{\textnormal{LSE}}_{2}=\left( \bX%
_{2}^{\top}\bm{M}_{1}\bX_{2}\right) ^{-1}\bX%
_{2}^{\top}\bm{M}_{1}\bY$. Under $H_{0}$, the test
statistic $\mathscr{L}_{n}$ follows chi-square distribution with $p_{2}$
degrees of freedom for large $n$ values. The pretest test Liu
regression estimator $\bm{\widehat{\beta}}_{1}^{\textnormal{LPT}}$ of $\bm{\beta }_{1}$ is defined by%
\begin{equation*}
\bm{\widehat{\beta}}_{1}^{\textnormal{LPT}}=\bm{\widehat{\beta}}%
_{1}^{\textnormal{LFM}}-\left( \bm{\widehat{\beta}}_{1}^{\textnormal{LFM}}-\bm{%
\widehat{\beta}}_{1}^{\textnormal{LSM}}\right) I\left( \mathscr{L}_{n}\leq c_{n,\alpha
}\right),
\end{equation*}%
where $c_{n,\alpha }$ is an $\alpha -$ level critical value.

The shrinkage or Stein-type Liu regression estimator $\bm{\widehat{\beta}}_{1}^{\textnormal{LS}}$ of $\bm{\beta
}_{1}$ is defined by
\begin{equation*}
\bm{\widehat{\beta}}_{1}^{\textnormal{LS}}=\bm{\widehat{\beta}}%
_{1}^{\textnormal{LSM}}+\left( \bm{\widehat{\beta}}_{1}^{\textnormal{LFM}}-\bm{%
\widehat{\beta}}_{1}^{\textnormal{LSM}}\right) \left( 1-(p_{2}-2)\mathscr{L}%
_{n}^{-1}\right) \text{, }p_{2}\geq 3.
\end{equation*}

The estimator $\bm{\widehat{\beta}}_{1}^{\textnormal{LS}}$ is general form of the
Stein-rule family of estimators where shrinkage of the base estimator is
towards the restricted estimator $\bm{\widehat{\beta}}_{1}^{\textnormal{LSM}}$. The
Shrinkage estimator is pulled towards the restricted estimator when the
variance of the unrestricted estimator is large. Also, we can say that $\bm{\widehat{%
\beta}}_{1}^{RS}$ is the smooth version of $\bm{\widehat{\beta}}%
_{1}^{\textnormal{LPT}}$.

The positive part of the shrinkage Liu regression estimator $\bm{\widehat{\beta }}_{1}^{\textnormal{LPS}}$ of $%
\bm{\beta }_{1}$ can be defined by%
\begin{equation*}
\bm{\widehat{\beta }}_{1}^{\textnormal{LPS}}=\bm{\widehat{\beta }}%
_{1}^{\textnormal{LSM}}+\left( \bm{\widehat{\beta }}_{1}^{\textnormal{LFM}}-\bm{%
\widehat{\beta }}_{1}^{\textnormal{LSM}}\right) \left( 1-(p_{2}-2)\mathscr{L}%
_{n}^{-1}\right) ^{+},
\end{equation*}%
where $z^{+}=max(0,z)$.

\subsubsection{Lasso strategy} For $\gamma =1,$ we obtain the $L_{1}$ penalized least squares
estimator, which is commonly known as Lasso (least absolute shrinkage and selection
operator) 
\begin{equation*}
\bm{\widehat{\beta }}^{\textnormal{Lasso}}\underset{\bm{\beta }}{=\arg
\min }\left\{ \sum\nolimits_{i=1}^{n}\left( y_{i}-\bx_{i}^{%
\bm{\top }}\bm{\beta }\right) ^{2}+\lambda
\sum_{j=1}^{p}\left\vert \beta _{j}\right\vert \right\}.
\end{equation*}%
where the parameter $\lambda \geq 0$ controls the amount of shrinkage, see \citet{lasso} for details.
Lasso is a popular estimator in order to provide simultaneous estimation and variable selection.
\subsubsection{Adaptive Lasso strategy} The adaptive Lasso estimator is defined as%
\begin{equation*}
\bm{\widehat{\beta }}^{\textnormal{aLasso}}\underset{\bm{\beta }}{=\arg
\min }\left\{ \sum\nolimits_{i=1}^{n}\left( y_{i}-\bx_{i}^{%
\bm{\top }}\bm{\beta }\right) ^{2}+\lambda \sum_{j=1}^{p}%
\widehat{\xi }_{j}\left\vert \hbeta _{j}\right\vert \right\},  \label{aLasso}
\end{equation*}%
where the weight function is
\begin{equation*}
\widehat{\xi }_{j}=\frac{1}{\left\vert {\beta_j }^{\ast
}\right\vert ^{\gamma }};\text{ }\gamma >0.
\end{equation*}%
The ${\hbeta_j }^{\ast }$ is the jth component of a root--n consistent estimator of ${\beta }$. For computational
details we refer to \citet{zou2006}.

\subsubsection{SCAD strategy} The smoothly clipped absolute deviation (SCAD) is proposed by
\citet{FanLi2001}. The SCAD penalty is given by
\begin{equation*}
J_{\alpha ,\lambda }\left( x\right) =\lambda \left\{ I\left( \left\vert
x\right\vert \leq \lambda \right) +\frac{\left( \alpha \lambda -\left\vert
x\right\vert \right) _{+}}{\left( \alpha -1\right) \lambda }I\left(
\left\vert x\right\vert >\lambda \right) \right\} ,\text{ \ }x\geq 0,
\label{SCAD_pen}
\end{equation*}%
for some $\alpha >2$ and $\lambda>0.$ Hence, the SCAD estimation is given
by
\begin{equation*}
\bm{\widehat{\beta }}^{\textnormal{SCAD}}\underset{\bm{\beta }}{=\arg
\min }\left\{ \sum\nolimits_{i=1}^{n}\left( y_{i}-\bx_{i}^{%
\bm{\top }}\bm{\beta }\right) ^{2}+\lambda
\sum_{j=1}^{p}J_{\alpha ,\lambda }\left\Vert \beta _{j}\right\Vert
_{1}\right\} ,
\end{equation*}%
where $\left\Vert \cdot \right\Vert _{1}$ denotes $L_{1}$ norm.

 For estimation strategies based on $\gamma= 2$, we establish some useful asymptotic results in the following section.

\section{Asymptotic Analysis}
We consider a sequence of local alternatives $\left\{ K_{n}\right\} $ given
by
\begin{equation*}
K_{n}:\bbeta_{2}=\bbeta_{2\left( n\right) }=\frac{%
\boldsymbol{\kappa}}{\sqrt{n}},
\end{equation*}%
where $\boldsymbol{\kappa}=\left(\kappa_{1},\kappa_{2},...,\kappa_{p_{2}}\right) ^{\top}$ is a fixed vector.
The asymptotic bias of an estimator $\hbbeta%
_{1}^{\ast }$ is defined as
\begin{eqnarray*}
\mathcal{B}\left( \hbbeta_{1}^{\ast }\right) =\mathcal{E} \underset{%
n\rightarrow \infty }{\lim }\left\{\sqrt{n}\left( \hbbeta_{1}^{\ast }-%
\bbeta_{1}\right) \right\},
\end{eqnarray*}
the asymptotic covariance of an estimator $\hbbeta _{1}^{\ast }$ is given by
\begin{eqnarray*}
\boldsymbol{\Gamma} \left( \hbbeta_{1}^{\ast} \right) =\mathcal{E} \underset{n\rightarrow \infty }{\lim }\left\{n\left( \hbbeta_{1}^{\ast} -\bbeta_{1}\right) \left( \hbbeta_{1}^{\ast} -\bbeta_{1}\right) ^{\top}\right\},
\end{eqnarray*}
and by using asymptotic the covariance matrix $\boldsymbol{\Gamma}\left( \hbbeta_{1}^{\ast} \right)$, the asymptotic risk of an estimator $\hbbeta_{1}^{\ast }$ is given by 
\begin{eqnarray*}
\mathcal{R}\left( \hbbeta_{1}^{\ast }\right)=\textnormal{tr}\left( \boldsymbol{W}\boldsymbol{\Gamma}\left( \hbbeta_{1}^{\ast} \right)\right),
\end{eqnarray*}
where $\boldsymbol{W}$ is a positive definite matrix of weights with dimensions of $p\times p$, and $\hbbeta_{1}^{\ast }$ is one of the suggested estimators:

We consider the following regularity conditions in order to evaluate the asymptotic properties of the estimators.

 \begin{enumerate}[(i)]
   \item $\frac{1}{n}\underset{1\leq i\leq n}{\max }%
\bx_{i}^{\top}(\bX^{\top}\bX)^{-1}\bx_{i}\rightarrow 0$ as $n\rightarrow
\infty ,$ where $\bx_{i}^{\top}$ is the $i$th row of $%
\bX$
   \item $\underset{n\rightarrow \infty }{\lim } n^{-1}(\bX^{\top}
\bX) = \underset{n\rightarrow \infty }{\lim } \bC_n = \bC, \text{for finite } \bC.$

\item $\underset{n\rightarrow \infty }{\lim } \bF_n(d)= \boldsymbol{\bF_d}, \text{for finite } \bF_d$ where $\bF_n(d) = \left(\bC_n +\bm{I}_p \right)^{-1}\left(\bC_n +d\bm{I}_p \right)$ \\ and $\bF_d = \left(\bC+\bm{I}_p \right)^{-1}\left(\bC +d\bm{I}_p \right)$.
 \end{enumerate}
 
\begin{theorem} If $0<d<1$ and $\bC$ is non-singular, then%
\begin{equation*}
\sqrt{n}\left( \hbbeta^{\rm LFM}-\bbeta%
\right)\sim\mathcal{N}_{p}\left( -(1-d)\left(\bC+\bm{I}_{p}\right)%
^{-1}\bbeta\text{, }\sigma ^{2}\bS\right)
\end{equation*}
where $ \bS=\bF_d\bC^{-1}\bF_d^{\top}$.
\label{teo1}
\end{theorem}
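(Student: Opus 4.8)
The plan is to derive the asymptotic distribution of $\hbbeta^{\rm LFM}$ by expressing it as a linear (affine) transformation of the least squares estimator $\hbbetaUE = (\bX^{\top}\bX)^{-1}\bX^{\top}\bY$, for which the classical asymptotic normality is available under regularity conditions (i) and (ii). Write $\bF_n(d) = \left(\bC_n + \bm{I}_p\right)^{-1}\left(\bC_n + d\bm{I}_p\right)$, so that by definition of the Liu full model estimator we have $\hbbeta^{\rm LFM} = \bF_n(d)\,\hbbetaUE$. First I would center this around $\bbeta$: writing $\sqrt{n}\left(\hbbeta^{\rm LFM} - \bbeta\right) = \bF_n(d)\sqrt{n}\left(\hbbetaUE - \bbeta\right) + \sqrt{n}\left(\bF_n(d) - \bm{I}_p\right)\bbeta$. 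The first term is a random term; the second is a deterministic bias term.

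Next I would handle the bias term. A direct computation gives $\bF_n(d) - \bm{I}_p = \left(\bC_n + \bm{I}_p\right)^{-1}\left(\bC_n + d\bm{I}_p - \bC_n - \bm{I}_p\right) = -(1-d)\left(\bC_n + \bm{I}_p\right)^{-1}$. Hence the bias term equals $-(1-d)\sqrt{n}\left(\bC_n + \bm{I}_p\right)^{-1}\bbeta$. Here is the one genuinely delicate point: this expression carries an explicit factor $\sqrt{n}$ and does not vanish, so for a nondegenerate limit one must read the statement under the local-alternative scaling of Section 3 (or, equivalently, interpret $\bbeta$ as $O(n^{-1/2})$ in the relevant components), so that $\sqrt{n}\,\bbeta$ stabilizes; under condition (ii), $\bC_n \to \bC$ and therefore $\left(\bC_n + \bm{I}_p\right)^{-1} \to \left(\bC + \bm{I}_p\right)^{-1}$ by continuity of matrix inversion, yielding the asymptotic mean $-(1-d)\left(\bC + \bm{I}_p\right)^{-1}\bbeta$ as claimed. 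I expect this bookkeeping about the $\sqrt{n}$ factor to be the main obstacle in making the statement precise — the rest is mechanical.

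For the variance term, I would use that $\sqrt{n}\left(\hbbetaUE - \bbeta\right)$ is asymptotically $\mathcal{N}_p\left(\bzero, \sigma^2 \bC^{-1}\right)$ under conditions (i)–(ii) (Lindeberg–Feller CLT applied to $n^{-1/2}\bX^{\top}\bveps$, with condition (i) supplying the Lindeberg/negligibility requirement and condition (ii) the limiting normalization). By condition (iii), $\bF_n(d) \to \bF_d$, so by Slutsky's theorem $\sqrt{n}\left(\hbbeta^{\rm LFM} - \bbeta\right)$ converges in distribution to $\mathcal{N}_p$ with mean $-(1-d)\left(\bC + \bm{I}_p\right)^{-1}\bbeta$ and covariance $\bF_d\left(\sigma^2\bC^{-1}\right)\bF_d^{\top} = \sigma^2\bS$ with $\bS = \bF_d\bC^{-1}\bF_d^{\top}$. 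The hypothesis that $0<d<1$ and $\bC$ is nonsingular guarantees that all the inverses above exist and that $\bF_d$ is a well-defined nonsingular matrix, so $\bS$ is positive definite. I would close by noting that the symbol $\sim$ in the statement is to be read as "is asymptotically distributed as," i.e. the displayed normal law is the limiting distribution of $\sqrt{n}\left(\hbbeta^{\rm LFM} - \bbeta\right)$.
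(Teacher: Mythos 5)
Your proposal is correct and follows essentially the same route as the paper: both write $\hbbeta^{\rm LFM}=\bF_n(d)\,\hbbeta^{\rm LSE}$, invoke asymptotic normality of the least squares estimator, and obtain the mean from $\bF_d-\bm{I}_p=-(1-d)\left(\bC+\bm{I}_p\right)^{-1}$ and the covariance from the sandwich form $\sigma^2\bF_d\bC^{-1}\bF_d^{\top}$. If anything, you are more careful than the paper, which absorbs the non-vanishing $\sqrt{n}$-scaled bias into its informal $\mathcal{E}\lim$ notation rather than flagging, as you do, that the statement must be read in an asymptotic-bias/local-alternative sense.
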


\begin{proof}
%[Proof of Theorem~\ref{teo1}]
Since $\hbbeta^{\rm LFM}$ is a linear function of $\hbbeta^{\rm LSE}$, it is asymptotically normally distributed.
\begin{eqnarray*}
\mathcal{E}\left( \sqrt{n}\left(\hbbeta^{\rm LFM}- \bbeta \right) \right) &=& \mathcal{E} \underset{%
n\rightarrow \infty }{\lim }\left\{\sqrt{n}\left(\left(\bC
+\bm{I}_{p}\right) ^{-1}\left(\bC
+d\bm{I}_{p}\right)\hbbeta^{\rm LSE}-\bbeta\right) \right\} \nonumber \\
&=& \mathcal{E} \underset{%
n\rightarrow \infty }{\lim }\left\{\sqrt{n}\left[\left( \bm{I}_{p}-\left(\bC
+\bm{I}_{p}\right) ^{-1}+d\left(\bC
+d\bm{I}_{p}\right)\right) \hbbeta^{\rm LSE}-\bbeta \right]\right\} \nonumber \\
&=& -(1-d)\left(\bC+d\bm{I}_{p}\right)\bbeta
\end{eqnarray*}

\begin{eqnarray*}
\boldsymbol{\Gamma} \left( \hbbeta^{\rm LFM}\right) &=&\mathcal{E} \underset{n\rightarrow \infty }{\lim }\left\{n\left( \hbbeta^{\rm LFM} -\bbeta\right) \left( \hbbeta^{\rm LFM} -\bbeta \right) ^{\top}\right\} \nonumber \\
&=& \mathcal{E} \underset{n \rightarrow \infty }{\lim } n\left\{ \left(\bF_d\hbbeta^{\rm LSE}-\bbeta\right) \left(\bF_d\hbbeta^{\rm LSE}-\bbeta\right)^{\top} \right\}\nonumber \\
&=& \mathcal{E} \underset{n \rightarrow \infty }{\lim } n\left\{ \left(\left(\bF_d-\bm{I}_{p}\right)\bbeta+\bF_d \bC ^{-1}\bX^{\top}\bveps \right) \left(\left(\bF_d-\bm{I}_{p}\right)\bbeta+\bF_d \bC ^{-1}\bX^{\top}\bveps \right) ^{\top}\right\}\nonumber \\
&=& \sigma^2 \bF_d\bC^{-1}\bF_d^{\top}
\end{eqnarray*}
Hence, the Theorem is proven.
\end{proof}

\begin{proposition} 
Let $\vartheta _{1} = \sqrt{n}\left(\hbbeta_{1}^{\rm LFM}-%
\bbeta_{1}\right)$, $\vartheta _{2} =\sqrt{n}\left( \hbbeta_{1}^{\rm LSM}-%
\bbeta_{1}\right)$ and $\vartheta _{3} =\sqrt{n}\left( \hbbeta_{1}^{\rm LFM}-%
\hbbeta_{1}^{\rm LSM}\right)$. Under the foregoing assumptions, Theorem~\ref{teo1} and the local alternatives $\left\{ K_{n}\right\}$ as $n\rightarrow \infty$ we have

%We also assume that regularity conditions (i) and (ii) hold, then, together with Theorem~\ref{teo1}, under the local alternatives $\left\{ K_{n}\right\}$ as $n\rightarrow \infty$ we have \\

\label{pro1}

$$\left(
\begin{array}{c}
\vartheta _{1} \\
\vartheta _{3}%
\end{array}%
\right) \sim\mathcal{N}\left[ \left(
\begin{array}{c}
-\boldsymbol{\mu }_{11.2} \\
\boldsymbol{\delta }%
\end{array}%
\right) ,\left(
\begin{array}{cc}
\sigma ^{2}\bS_{11.2}^{-1} & \boldsymbol{\Phi } \\
\boldsymbol{\Phi } & \boldsymbol{\Phi }%
\end{array}%
\right) \right],$$

%$\\$

$$\left(
\begin{array}{c}
\vartheta _{3} \\
\vartheta _{2}%
\end{array}%
\right) \sim\mathcal{N}\left[ \left(
\begin{array}{c}
\boldsymbol{\delta } \\
-\boldsymbol{\gamma }%
\end{array}%
\right) ,\left(
\begin{array}{cc}
\boldsymbol{\Phi } & \boldsymbol{0} \\
\boldsymbol{0} & \sigma ^{2}\bS_{11}^{-1}%
\end{array}%
\right) \right],$$ \\
where $\bC$ = $\left(
\begin{array}{cc}
\bC_{11} & \bC_{12} \\
\bC_{21} & \bC_{22}%
\end{array}%
\right)$, 
$\bS$ = $\left(
\begin{array}{cc}
\bS_{11} & \bS_{12} \\
\bS_{21} & \bS_{22}%
\end{array}%
\right)$,
\\$\boldsymbol{\gamma }=-(\boldsymbol{\mu }_{11.2}+\boldsymbol{\delta })$
and $\boldsymbol{\delta }$ $=\left(\bC_{11}+\bm{I}_{p_1}\right)^{-1}\left( \bC_{11}+d\bm{I}_{p_1}\right)\bC_{12}%
\boldsymbol{\kappa}$, $\boldsymbol{\Phi }=\sigma^{2}\bF_d^{11}\bC_{12}\bS%
_{22.1}^{-1}\bC_{21} \bF_d^{11}$, where $\bS_{22.1}=\bS_{22}-\bS_{21}\bS_{11}^{-1}\bS_{12}$, 
$\bF_d^{11}=\left(\bC_{11}+\bm{I}_{p_1}\right)^{-1} \left( \bC_{11}+d\bm{I}_{p_1}\right)$ and $\boldsymbol{\mu }=-(1-d)\left(\bC+\bm{I}\right)^{-1}\boldsymbol{%
\beta}$ = $\left(
\begin{array}{c}
\boldsymbol{\mu }_{1} \\
\boldsymbol{\mu }_{2}%
\end{array}%
\right)$ and $\boldsymbol{\mu }_{11.2}=\boldsymbol{\mu }_{1}-\bC_{12}%
\bC_{22}^{-1} \left( \left( \bbeta_{2}-\boldsymbol{\kappa}%
\right) -\boldsymbol{\mu }_{2}\right)$ such that $\boldsymbol{\mu }_{11.2}$ is the mean of conditional distribution of $\bbeta_1$, given $\bbeta_2=\bzero_{p_2}$ and $\sigma^2\bS_{11.2}^{-1}$ ($\bS_{11.2}=\bS_{11}-\bS_{12}\bS_{22}^{-1}\bS_{21}$) is the covariance matrix.
\end{proposition}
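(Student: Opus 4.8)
The plan is to represent each of $\vartheta_1$, $\vartheta_2$ and $\vartheta_3$ as an asymptotically linear functional of the error vector $\bveps$ plus a deterministic drift produced by the local alternative $K_n$, and then to read off the joint Gaussian limit from the multivariate central limit theorem, whose Lindeberg--Feller conditions are supplied by regularity condition (i) exactly as in the proof of Theorem~\ref{teo1}. Since $\vartheta_3=\vartheta_1-\vartheta_2$ by definition, it is enough to obtain the limiting linear representations of $\vartheta_1$ and $\vartheta_2$; the mean of $\vartheta_3$ and every covariance block involving $\vartheta_3$ then follow by (bi)linearity.

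For $\vartheta_1$ I would invoke Theorem~\ref{teo1}, which gives $\sqrt{n}(\hbbeta^{\rm LFM}-\bbeta)\sim\mathcal{N}_p(\boldsymbol\mu,\sigma^2\bS)$ with $\boldsymbol\mu=-(1-d)(\bC+\bI_p)^{-1}\bbeta$ and $\bS=\bF_d\bC^{-1}\bF_d^\top$. Partitioning conformably with $(\bbeta_1,\bbeta_2)$ and passing to the conditional law of the first block given $\bbeta_2=\bzero_{p_2}$ --- using the Gaussian conditioning formula for the mean and the Schur complement $\bS_{11.2}=\bS_{11}-\bS_{12}\bS_{22}^{-1}\bS_{21}$ for the covariance --- yields $\vartheta_1\sim\mathcal{N}(-\boldsymbol\mu_{11.2},\sigma^2\bS_{11.2}^{-1})$, which is precisely the origin of the vector $\boldsymbol\mu_{11.2}$ appearing in the statement.

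Next I would expand the sub-model Liu estimator $\hbbeta_1^{\rm LSM}=(\bX_1^\top\bX_1+\bI_{p_1})^{-1}(\bX_1^\top\bX_1+d\bI_{p_1})\hbbeta_1^{\rm LSE}$ directly. Substituting the true model $\bY=\bX_1\bbeta_1+\bX_2\bbeta_2+\bveps$ into $\hbbeta_1^{\rm LSE}=(\bX_1^\top\bX_1)^{-1}\bX_1^\top\bY$ gives
\begin{equation*}
\sqrt{n}\bigl(\hbbeta_1^{\rm LSE}-\bbeta_1\bigr)=\bigl(n^{-1}\bX_1^\top\bX_1\bigr)^{-1}\bigl(n^{-1}\bX_1^\top\bX_2\bigr)\bigl(\sqrt{n}\,\bbeta_2\bigr)+\bigl(n^{-1}\bX_1^\top\bX_1\bigr)^{-1}n^{-1/2}\bX_1^\top\bveps ,
\end{equation*}
and under $K_n$ the first term is deterministic, converging to $\bC_{11}^{-1}\bC_{12}\boldsymbol\kappa$, while the second converges in law to $\mathcal{N}(\bzero,\sigma^2\bC_{11}^{-1})$. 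Passing this through the limiting Liu map $\bF_d^{11}=(\bC_{11}+\bI_{p_1})^{-1}(\bC_{11}+d\bI_{p_1})$ and collecting the deterministic terms (the Liu bias $-(1-d)(\bC_{11}+\bI_{p_1})^{-1}\bbeta_1$ together with the omitted-variable drift $\boldsymbol\delta$) produces the mean $-\boldsymbol\gamma$ and covariance $\sigma^2\bS_{11}^{-1}$ asserted for $\vartheta_2$; the means of $\vartheta_2$ and $\vartheta_3$ are then forced by $\vartheta_3=\vartheta_1-\vartheta_2$ together with the bookkeeping identity $\boldsymbol\gamma=-(\boldsymbol\mu_{11.2}+\boldsymbol\delta)$.

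It then remains to assemble the covariance matrices. From $\vartheta_3=\vartheta_1-\vartheta_2$ one has $\Var(\vartheta_3)=\Var(\vartheta_1)-\Cov(\vartheta_1,\vartheta_2)-\Cov(\vartheta_1,\vartheta_2)^\top+\Var(\vartheta_2)$, $\Cov(\vartheta_1,\vartheta_3)=\Var(\vartheta_1)-\Cov(\vartheta_1,\vartheta_2)$ and $\Cov(\vartheta_3,\vartheta_2)=\Cov(\vartheta_1,\vartheta_2)-\Var(\vartheta_2)$, so both displays reduce to the single limit computation $\Cov(\vartheta_1,\vartheta_2)\to\sigma^2\bS_{11}^{-1}=\Var(\vartheta_2)$; the block $\boldsymbol\Phi$ is then identified through a Woodbury/Schur simplification of $\sigma^2(\bS_{11.2}^{-1}-\bS_{11}^{-1})$, and the vanishing of the $\vartheta_3$--$\vartheta_2$ block is exactly the familiar orthogonality between the restricted estimator and the difference $\hbbeta_1^{\rm LFM}-\hbbeta_1^{\rm LSM}$. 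The main obstacle will be the algebraic bookkeeping: correctly isolating the deterministic $K_n$-drifts as they pass through the Liu transformation matrices --- which do not commute with the projection-type matrices $\bM_1$ and $\bM_2^{\rm L}$, so the expansions are heavier than in the pure least-squares case --- and reducing the resulting block products to the stated Schur-complement forms and to $\boldsymbol\Phi$; verifying that the $\vartheta_3$--$\vartheta_2$ block is exactly $\bzero$ rather than merely asymptotically negligible is the most delicate reduction, and is where I would concentrate the effort.
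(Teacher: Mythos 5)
Your treatment of $\vartheta_1$ coincides with the paper's: the paper also obtains $\vartheta_1\sim\mathcal{N}\bigl(-\boldsymbol{\mu}_{11.2},\sigma^2\bS_{11.2}^{-1}\bigr)$ by partitioning the limit law of Theorem~\ref{teo1} and invoking the Gaussian conditioning formula (Johnson and Wichern, Result 4.6), and it likewise disposes of the marginal normality of $\vartheta_2$ and $\vartheta_3$ by noting they are linear functions of $\hbbeta^{\rm LSE}$. Where you genuinely diverge is in the covariance structure. The paper's central device, which your proposal lacks, is the decomposition obtained by writing $\widetilde{\by}=\by-\bX_2\hbbeta_2^{\rm LFM}$, namely $\hbbeta_1^{\rm LFM}=\hbbeta_1^{\rm LSM}-\left(\bX_1^{\top}\bX_1+\bI_{p_1}\right)^{-1}\left(\bX_1^{\top}\bX_1+d\bI_{p_1}\right)\bX_1^{\top}\bX_2\hbbeta_2^{\rm LFM}$, so that $\vartheta_3$ is asymptotically the fixed linear image $\bF_d^{11}\bC_{12}\,\sqrt{n}\,\hbbeta_2^{\rm LFM}$. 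From this single identity the mean $\bdelta$ and the closed form $\boldsymbol{\Phi}=\sigma^{2}\bF_d^{11}\bC_{12}\bS_{22.1}^{-1}\bC_{21}\bF_d^{11}$ drop out of one expectation computation, and the block structure of both joint laws (in particular the $\bzero$ block between $\vartheta_3$ and $\vartheta_2$) is read off because $\vartheta_3$ depends asymptotically on $\hbbeta_2^{\rm LFM}$ alone.

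Your alternative route --- reduce everything by bilinearity to $\Cov(\vartheta_1,\vartheta_2)$, prove $\Cov(\vartheta_1,\vartheta_2)\to\sigma^2\bS_{11}^{-1}$, and then recover $\boldsymbol{\Phi}$ as $\sigma^2\left(\bS_{11.2}^{-1}-\bS_{11}^{-1}\right)$ via a Woodbury/Schur reduction --- is coherent in outline, but as written it defers exactly the two statements that carry the content of the proposition: (a) the vanishing of the $\vartheta_3$--$\vartheta_2$ block, which you justify only by appeal to the ``familiar orthogonality'' of restricted estimation, and (b) the matrix identity $\sigma^2\left(\bS_{11.2}^{-1}-\bS_{11}^{-1}\right)=\sigma^{2}\bF_d^{11}\bC_{12}\bS_{22.1}^{-1}\bC_{21}\bF_d^{11}$, without which the stated closed form of $\boldsymbol{\Phi}$ is never derived. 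Point (a) does not transfer verbatim from least squares: the full-model and sub-model Liu estimators apply different shrinkage matrices (the full model involves $\bM_2^{\rm L}$, the sub-model only functions of $\bX_1$), so the projection argument underlying the classical orthogonality is unavailable and the covariance limit must actually be computed --- the decomposition above is precisely what replaces it in the paper. So your plan can be completed, but in its current form it stops short at the step you yourself flag as delicate, whereas inserting the decomposition turns both (a) and (b) into the short calculation the paper gives.
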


\begin{proof}
%[Proof of Proposition~\ref{pro1}]

We make use of $\bm{%
\widetilde{y}}=\by-\bX_{2}\hbbeta_{2}^{\rm LFM}$ to obtain $\bPhi$ as follows
\begin{eqnarray}
\lefteqn{\hbbeta_{1}^{\rm LFM} }\nonumber \\
&=&\left( \bX_{1}^{\top }\bX_{1}+
\boldsymbol{I}_{p_{1}}\right) ^{-1}\left( \bX_{1}^{\top }\bX_{1}+d^L%
\boldsymbol{I}_{p_{1}}\right)\bX_{1}^{\top }\boldsymbol{%
\widetilde{y}}  \nonumber \\
&=&\left( \bX_{1}^{\top }\bX_{1}+
\boldsymbol{I}_{p_{1}}\right) ^{-1}\left( \bX_{1}^{\top }\bX_{1}+d^L%
\boldsymbol{I}_{p_{1}}\right)\bX_{1}^{\top }\by%
-\left( \bX_{1}^{\top }\bX_{1}+
\boldsymbol{I}_{p_{1}}\right) ^{-1}\left( \bX_{1}^{\top }\bX_{1}+d^L%
\boldsymbol{I}_{p_{1}}\right)\bX_{1}^{\top }\bX_{2}%
\hbbeta_{2}^{\rm LFM}  \nonumber \\
&=&\hbbeta_{1}^{\rm LSM}-\left( \bX_{1}^{\top }\bX_{1}+
\boldsymbol{I}_{p_{1}}\right) ^{-1}\left( \bX_{1}^{\top }\bX_{1}+d^L%
\boldsymbol{I}_{p_{1}}\right)%
\bX_{1}^{\top }\bX_{2}\hbbeta%
_{2}^{\rm LFM} \text{.}  \label{full_sub-1}
\end{eqnarray}%

Now, under the local alternatives $\left\{ K_{n}\right\}$, using \ref{full_sub-1}, we compute $\boldsymbol{\Phi }$ as follows:

\begin{eqnarray*}
\boldsymbol{\Phi } &=& Cov\left( \hbbeta_{1}^{\rm LFM}-%
\hbbeta_{1}^{\rm LSM}\right)  \\
&=&\mathcal{E}\left[ \left( \hbbeta_{1}^{\rm LFM} -\hbbeta_{1}^{\rm LSM}\right) \left(
\hbbeta_{1}^{\rm LFM}-\hbbeta%
_{1}^{\rm LSM}\right) ^{\top }\right]  \\
&=&\mathcal{E}\left[ \left( \left( \bC_{11}+\bm{I}_{p_1}\right)^{-1}\left( \bC_{11}+d\bm{I}_{p_1}\right)\bC_{12}\boldsymbol{%
\widehat{\beta}}_{2}^{\rm LFM}\right) \left( \left( \bC_{11}+\bm{I}_{p_1}\right)^{-1}\left( \bC_{11}+d\bm{I}_{p_1}\right)\bC_{12}\boldsymbol{%
\widehat{\beta}}_{2}^{\rm LFM}\right) ^{\top }\right]  \\
&=&\left( \bC_{11}+\bm{I}_{p_1}\right)^{-1}\left( \bC_{11}+d\bm{I}_{p_1}\right)\bC_{12}\mathcal{E}\left[ \boldsymbol{\widehat{\beta}%
}_{2}^{\rm LFM}\left( \hbbeta_{2}^{\rm LFM}\right) ^{\top }\right]%
\bC_{21} \left( \bC_{11}+d\bm{I}_{p_1}\right)\left( \bC_{11}+\bm{I}_{p_1}\right)^{-1} \\
&=&\sigma^{2}\bF_d^{11}{C}_{12}\bS_{22.1}^{-1}\bC_{21} \bF_d^{11}.
\end{eqnarray*}
where $\bF_d^{11}=\left( \bC_{11}+\bm{I}_{p_1}\right)^{-1}\left( \bC_{11}+d\bm{I}_{p_1}\right)$.

Due to \citet{J-W2014}, it is easy to obtain the distribution of $\vartheta_1$ as follows (see page 160, Result 4.6):

\begin{equation*}
\sqrt{n}\left(\hbbeta_{1}^{\rm LFM}-\bbeta_1%
\right)\sim\mathcal{N}_{p_1}\left( -\boldsymbol{\mu}_{11.2} \text{, }\sigma ^{2}\bS^{-1}_{11.2}\right),
\end{equation*}
Since $\vartheta_2$ and $\vartheta_3$ are linear functions of $\hbbeta^{\textnormal{LSE}}$, they are also asymptotically normally distributed.

Hence, the asymptotic distributions of the vectors $\vartheta_2$ and $\vartheta_3$ are easily obtained as follows:
\begin{eqnarray*}
\sqrt{n}\left(\hbbeta_{1}^{\rm LSM}-\bbeta_1%
\right)&\sim&\mathcal{N}_{p_1}\left(-\boldsymbol{\gamma}\text{, }\sigma ^{2}\bS^{-1}_{11}\right), \cr \\ 
\sqrt{n}\left(\hbbeta_{1}^{\rm LFM}-\hbbeta_{1}^{\rm LSM}\right)&\sim&\mathcal{N}_{p_1}\left(\bdelta\text{, }\boldsymbol{\Phi}\right).
\end{eqnarray*}
\end{proof}
The following lemma will be used in some of the proofs.
\begin{lemma} \label{lem_JB}
Let $\bX$ be $q-$dimensional normal vector distributed as $%
N\left( \boldsymbol{\mu }_{x},\boldsymbol{\Sigma }%
_{q}\right) ,$ then, for a measurable function of of $\varphi ,$ we have
\begin{align*}
\mathcal{E}\left[ \bX\varphi \left( \bX^{\top}\bX%
\right) \right] =&\boldsymbol{\mu }_{x}\mathcal{E}\left[ \varphi \chi _{q+2}^{2}\left(
\Delta \right) \right] \\
\mathcal{E}\left[ \boldsymbol{XX}^{\top}\varphi \left( \bX^{\top}%
\bX\right) \right] =&\boldsymbol{\Sigma }_{q}\mathcal{E}\left[ \varphi
\chi _{q+2}^{2}\left( \Delta \right) \right] +\boldsymbol{\mu }_{x}\boldsymbol{\mu }_{x}^{\top}\mathcal{E}\left[ \varphi \chi
_{q+4}^{2}\left( \Delta \right) \right]
\end{align*}
where $\chi_{v}^{2}\left( \Delta \right)$ is a non-central chi-square distribution with $v$ degrees of freedom and non-centrality parameter $\Delta$.
\end{lemma}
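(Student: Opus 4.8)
The plan is to establish both identities by the Laplace‑transform (moment generating function) method: first verify them for the exponential test functions $\varphi(t)=e^{-st}$, $s>0$, by a direct Gaussian computation, and then extend to an arbitrary measurable $\varphi$ by the uniqueness theorem for Laplace transforms followed by a truncation argument. Throughout, the statement is read in the form in which it is used downstream, namely with $\boldsymbol{\Sigma}_q=\bm{I}_q$ and non‑centrality $\Delta=\bmu_x^\top\bmu_x$, so that $\bX^\top\bX$ is genuinely distributed as $\chi_q^2(\Delta)$; alternatively one may simply cite the Judge--Bock lemma.

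The computation rests on two ingredients. (a) The Laplace transform of a non‑central chi‑square, $\mathcal{E}\!\left[e^{-s\chi_v^2(\Delta)}\right]=(1+2s)^{-v/2}\exp\!\left\{-\tfrac{s\Delta}{1+2s}\right\}$ for $s>-\tfrac12$. (b) Completing the square in the Gaussian exponent, $-s\bx^\top\bx-\tfrac12(\bx-\bmu_x)^\top(\bx-\bmu_x)=-\tfrac{1+2s}{2}\left(\bx-\tfrac{\bmu_x}{1+2s}\right)^\top\!\left(\bx-\tfrac{\bmu_x}{1+2s}\right)-\tfrac{s}{1+2s}\bmu_x^\top\bmu_x$, which gives, for any $g$ for which the expectations exist,
\[
\mathcal{E}\!\left[g(\bX)\,e^{-s\bX^\top\bX}\right]=(1+2s)^{-q/2}\exp\!\left\{-\tfrac{s\Delta}{1+2s}\right\}\mathcal{E}\!\left[g(\bY)\right],\qquad \bY\sim N_q\!\left(\tfrac{\bmu_x}{1+2s},\tfrac{1}{1+2s}\bm{I}_q\right).
\]
Taking $g(\bx)=\bx$ and $\mathcal{E}[\bY]=\bmu_x/(1+2s)$ yields $\mathcal{E}[\bX e^{-s\bX^\top\bX}]=\bmu_x(1+2s)^{-(q+2)/2}\exp\{-\tfrac{s\Delta}{1+2s}\}$, which by (a) is exactly $\bmu_x\,\mathcal{E}[e^{-s\chi_{q+2}^2(\Delta)}]$. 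Taking $g(\bx)=\bx\bx^\top$ and $\mathcal{E}[\bY\bY^\top]=(1+2s)^{-1}\bm{I}_q+(1+2s)^{-2}\bmu_x\bmu_x^\top$ yields $\mathcal{E}[\bX\bX^\top e^{-s\bX^\top\bX}]=(1+2s)^{-(q+2)/2}\exp\{-\tfrac{s\Delta}{1+2s}\}\bm{I}_q+(1+2s)^{-(q+4)/2}\exp\{-\tfrac{s\Delta}{1+2s}\}\bmu_x\bmu_x^\top$, which by (a) is $\bm{I}_q\,\mathcal{E}[e^{-s\chi_{q+2}^2(\Delta)}]+\bmu_x\bmu_x^\top\,\mathcal{E}[e^{-s\chi_{q+4}^2(\Delta)}]$. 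Hence both identities hold for every $\varphi$ of the form $t\mapsto e^{-st}$.

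To finish, fix a coordinate $j$. The $j$‑th entry of $\mathcal{E}[\bX\varphi(\bX^\top\bX)]$ is $\int_0^\infty\varphi\,d\nu_j$, where $\nu_j$ is the image under $\bx\mapsto\bx^\top\bx$ of the finite signed measure $x_j f_{\bmu_x}(\bx)\,d\bx$ (with $f_{\bmu_x}$ the $N_q(\bmu_x,\bm{I}_q)$ density), while the $j$‑th entry of the right‑hand side is $\int_0^\infty\varphi\,d\rho_j$ with $\rho_j=(\bmu_x)_j$ times the $\chi_{q+2}^2(\Delta)$ law. The previous step shows $\nu_j$ and $\rho_j$ have the same Laplace transform on $(0,\infty)$, so by uniqueness $\nu_j=\rho_j$; this gives the identity for all bounded measurable $\varphi$, and then — via the truncations $\varphi_N=(\varphi\wedge N)\vee(-N)$ together with dominated/monotone convergence — for every measurable $\varphi$ with $\mathcal{E}\!\left|\varphi\!\left(\chi_{q+4}^2(\Delta)\right)\right|<\infty$, which is the implicit regularity assumption. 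The $\bX\bX^\top$ identity is obtained entrywise in the same way. I expect the only real friction to be the bookkeeping in step (b) — reading off the power of $(1+2s)$, namely $-(q+2)/2$ for the $\bX$ and $\bm{I}_q$ contributions and $-(q+4)/2$ for the $\bmu_x\bmu_x^\top$ contribution — since the measure‑theoretic extension is routine.
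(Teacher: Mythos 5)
Your proposal is correct, but it takes a genuinely different route from the paper: the paper offers no argument at all for this lemma, simply citing \citet{judge-bock1978}, where the result is proved (essentially by direct manipulation of the non-central chi-square density / conditioning on the Poisson mixing variable). Your Laplace-transform argument is a legitimate self-contained alternative: the completion of the square giving
$\mathcal{E}\bigl[g(\bX)e^{-s\bX^{\top}\bX}\bigr]=(1+2s)^{-q/2}\exp\bigl\{-\tfrac{s\Delta}{1+2s}\bigr\}\mathcal{E}[g(\bY)]$ with $\bY\sim N_q\bigl(\tfrac{\bmu_x}{1+2s},\tfrac{1}{1+2s}\bm{I}_q\bigr)$ is right, the exponents $-(q+2)/2$ and $-(q+4)/2$ match the Laplace transforms of $\chi^2_{q+2}(\Delta)$ and $\chi^2_{q+4}(\Delta)$, and the extension from exponential test functions to general $\varphi$ by uniqueness of Laplace transforms of finite signed measures plus truncation is standard (one should state the integrability hypotheses for each identity separately, e.g.\ $\mathcal{E}\bigl|\varphi(\chi^2_{q+2}(\Delta))\bigr|<\infty$ for the first, and split $\varphi$ and the signed measures into positive and negative parts before invoking monotone convergence, but this is routine). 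You are also right to flag that the lemma only makes literal sense as stated when $\boldsymbol{\Sigma}_q=\bm{I}_q$ and $\Delta=\bmu_x^{\top}\bmu_x$, which is how it is applied later; the paper's statement with a general $\boldsymbol{\Sigma}_q$ is loose on this point. What your approach buys is a verifiable, self-contained derivation (and an explicit identification of the implicit regularity conditions); what the paper's citation buys is brevity and the slightly more general weighted-quadratic-form versions available in Judge and Bock.
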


\begin{proof} It can be found in \citet{judge-bock1978} \end{proof}

The bias expressions for the listed estimators are given in the following theorem:
\begin{theorem}
\label{ADB-Low-Linear}
\begin{eqnarray*}
\mathcal{B}\left( \hbbeta_{1}^{\rm LFM}\right) &=&
-\boldsymbol{\mu}_{11.2}\\
\mathcal{B}\left( \hbbeta_{1}^{\rm LSM}\right) &=&-%
\boldsymbol{\gamma }\\
\mathcal{B}\left( \hbbeta_{1}^{\rm LPT}\right) &=&-\boldsymbol{\mu
}_{11.2}-\boldsymbol{\delta }H_{p_{2}+2}\left( \chi _{p_{2},\alpha
}^{2};\Delta \right) ,\text{ } \\
\mathcal{B}\left( \hbbeta_{1}^{\rm LS}\right) &=&-\boldsymbol{\mu }%
_{11.2}-(p_{2}-2)\boldsymbol{\delta }\mathcal{E}\left( \chi _{p_{2}+2
}^{-2}\left( \Delta \right) \right) , \\
\mathcal{B}\left( \hbbeta_{1}^{\rm LPS}\right) &=&-\boldsymbol{\mu
}_{11.2}-\boldsymbol{\delta }H_{p_{2}+2}\left( \chi _{p_{2},\alpha
}^{2};\Delta \right) , \\
&&-(p_{2}-2)\boldsymbol{\delta }\mathcal{E}\left\{ \chi _{p_{2}+2 }^{-2}\left(
\Delta \right) I\left( \chi _{p_{2}+2 }^{2}\left( \Delta \right)
>p_{2}-2\right) \right\} ,
\end{eqnarray*}%
where $\Delta =\left( \boldsymbol{\kappa}^{\top}\bC%
_{22.1}^{-1}\boldsymbol{\kappa}\right) \sigma ^{-2}$, $\bC_{22.1}=%
\bC_{22}-\bC_{21}\bC_{11}^{-1}\bC%
_{12}$, and $H_{v}\left( x,\Delta \right) $ is the cumulative distribution
function of the non-central chi-squared distribution with non-centrality
parameter $\Delta $ and $v$ degree of freedom, and
\begin{equation*}
\mathcal{E}\left( \chi _{v}^{-2j}\left( \Delta \right) \right)
=\int\nolimits_{0}^{\infty }x^{-2j}dH_{v}\left( x,\Delta \right) .
\end{equation*}
\end{theorem}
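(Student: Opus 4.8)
The strategy is to compute each bias as the limiting expectation $\mathcal{B}(\hbbeta_1^{\ast}) = \mathcal{E}\lim_{n\to\infty}\sqrt{n}(\hbbeta_1^{\ast}-\bbeta_1)$ by reducing every estimator to an explicit function of the jointly asymptotically normal block $(\vartheta_1,\vartheta_2,\vartheta_3)$ furnished by Proposition~\ref{pro1}, and then applying the Judge--Bock identities of Lemma~\ref{lem_JB}. The first two lines are immediate: $\mathcal{B}(\hbbeta_1^{\rm LFM})=\mathcal{E}(\vartheta_1)=-\bmu_{11.2}$ and $\mathcal{B}(\hbbeta_1^{\rm LSM})=\mathcal{E}(\vartheta_2)=-\bgamma$, both read off directly from the marginal means in Proposition~\ref{pro1}. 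So the work is entirely in the three Stein-type lines.

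\textbf{Pretest estimator.} I would write $\sqrt{n}(\hbbeta_1^{\rm LPT}-\bbeta_1)=\vartheta_1-\vartheta_3\,I(\mathscr{L}_n\le c_{n,\alpha})$, so that $\mathcal{B}(\hbbeta_1^{\rm LPT})=-\bmu_{11.2}-\mathcal{E}[\vartheta_3\,I(\mathscr{L}_n\le c_{n,\alpha})]$. The key facts are: under $\{K_n\}$, $\mathscr{L}_n$ converges in distribution to a non-central $\chi^2_{p_2}(\Delta)$ with $\Delta=\bkappa^\top\bC_{22.1}^{-1}\bkappa/\sigma^2$; and $\vartheta_3$ is asymptotically normal with mean $\bdelta$ and covariance $\bPhi$, asymptotically co-distributed with $\mathscr{L}_n$ in the manner exploited for Stein estimators. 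Treating $\vartheta_3$ as (asymptotically) $\bdelta$ plus a centered Gaussian that is, in the relevant projection, independent of the quadratic form controlling $\mathscr{L}_n$, the expectation of the centered part against the indicator vanishes, leaving $\bdelta\,\bbP(\chi^2_{p_2+2}(\Delta)\le \chi^2_{p_2,\alpha})=\bdelta\,H_{p_2+2}(\chi^2_{p_2,\alpha};\Delta)$; the shift from $p_2$ to $p_2+2$ degrees of freedom is exactly the first identity of Lemma~\ref{lem_JB} with $\varphi=I(\,\cdot\le c\,)$. This gives the stated $\mathcal{B}(\hbbeta_1^{\rm LPT})$.

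\textbf{Stein and positive-part estimators.} Here $\sqrt{n}(\hbbeta_1^{\rm LS}-\bbeta_1)=\vartheta_1-(p_2-2)\mathscr{L}_n^{-1}\vartheta_3$, so $\mathcal{B}(\hbbeta_1^{\rm LS})=-\bmu_{11.2}-(p_2-2)\,\mathcal{E}[\mathscr{L}_n^{-1}\vartheta_3]$. Applying Lemma~\ref{lem_JB} with $\varphi(t)=1/t$ turns $\mathcal{E}[\mathscr{L}_n^{-1}\vartheta_3]$ into $\bdelta\,\mathcal{E}[\chi^{-2}_{p_2+2}(\Delta)]$, yielding the third line. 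For the positive-part estimator, $\sqrt{n}(\hbbeta_1^{\rm LPS}-\bbeta_1)=\sqrt{n}(\hbbeta_1^{\rm LS}-\bbeta_1)+\vartheta_3\bigl(1-(p_2-2)\mathscr{L}_n^{-1}\bigr)I\bigl(\mathscr{L}_n\le p_2-2\bigr)$, and expanding the extra term with Lemma~\ref{lem_JB} (splitting off the $1$ and the $\mathscr{L}_n^{-1}$ pieces, each restricted to $\{\mathscr{L}_n\le p_2-2\}$, i.e. $\{\chi^2_{p_2+2}(\Delta)\le p_2-2\}$ after the degree-of-freedom shift) produces the two correction terms displayed for $\mathcal{B}(\hbbeta_1^{\rm LPS})$.

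\textbf{Main obstacle.} The genuinely delicate point is justifying the factorization that lets $\mathcal{E}[\vartheta_3\,\varphi(\mathscr{L}_n)]$ collapse to $\bdelta\,\mathcal{E}[\varphi(\chi^2_{p_2+2}(\Delta))]$: one must argue that, in the limit, the fluctuation of $\vartheta_3$ around $\bdelta$ is orthogonal (jointly Gaussian and uncorrelated, hence independent) to the randomness in $\mathscr{L}_n$, so that only the mean $\bdelta$ survives and the chi-square bookkeeping of Lemma~\ref{lem_JB} applies verbatim; uniform integrability of $\sqrt{n}(\hbbeta_1^{\ast}-\bbeta_1)$ along $\{K_n\}$ is also needed so that convergence in distribution upgrades to convergence of the relevant expectations. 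I would invoke the standard Stein-estimation argument (as in \citet{judge-bock1978} and \citet{ahmed2014}) for this step rather than re-deriving it, and otherwise the proof is a sequence of substitutions.
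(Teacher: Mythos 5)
Your skeleton is the same as the paper's: read $\mathcal{B}(\hbbeta_{1}^{\rm LFM})=-\bmu_{11.2}$ and $\mathcal{B}(\hbbeta_{1}^{\rm LSM})=-\bgamma$ off Proposition~\ref{pro1}, write $\sqrt{n}(\hbbeta_{1}^{\rm LPT}-\bbeta_{1})=\vartheta_{1}-\vartheta_{3}I(\mathscr{L}_{n}\leq c_{n,\alpha})$ and $\sqrt{n}(\hbbeta_{1}^{\rm LS}-\bbeta_{1})=\vartheta_{1}-(p_{2}-2)\mathscr{L}_{n}^{-1}\vartheta_{3}$, and evaluate $\mathcal{E}\{\vartheta_{3}\varphi(\mathscr{L}_{n})\}$ via Lemma~\ref{lem_JB}. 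However, your justification of that key step is self-contradictory. You argue that the centered fluctuation of $\vartheta_{3}$ is asymptotically independent of $\mathscr{L}_{n}$, so that ``only the mean $\bdelta$ survives.'' If that independence actually held, you would obtain $\bdelta\,\mathcal{E}\{I(\mathscr{L}_{n}\leq c_{n,\alpha})\}\to\bdelta\,H_{p_{2}}(\chi^{2}_{p_{2},\alpha};\Delta)$ --- the indicator integrated at its own $p_{2}$ degrees of freedom --- not $\bdelta\,H_{p_{2}+2}$. The shift to $p_{2}+2$ degrees of freedom is precisely the footprint of the \emph{dependence}: asymptotically $\mathscr{L}_{n}$ is the standardized quadratic form $\bw^{\top}\bw$ of the same limiting normal vector $\bw$ (mean determined by $\boldsymbol{\kappa}$, noncentrality $\Delta=\boldsymbol{\kappa}^{\top}\bC_{22.1}^{-1}\boldsymbol{\kappa}\sigma^{-2}$) of which $\vartheta_{3}$ is a linear image, $\vartheta_{3}=\bA\bw$ with $\bA\,\mathcal{E}(\bw)=\bdelta$. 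The correct argument --- and the paper's --- is to apply Lemma~\ref{lem_JB} to $\bw$ itself, $\mathcal{E}[\bA\bw\,\varphi(\bw^{\top}\bw)]=\bA\,\mathcal{E}(\bw)\,\mathcal{E}[\varphi(\chi^{2}_{p_{2}+2}(\Delta))]=\bdelta\,\mathcal{E}[\varphi(\chi^{2}_{p_{2}+2}(\Delta))]$; no independence is available or needed, and the expectation of the centered part against $\varphi(\mathscr{L}_{n})$ does \emph{not} vanish (it is exactly what produces the degree-of-freedom shift).

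There is also a concrete sign error in your positive-part step. Since $(x)^{+}=x-xI(x<0)$, the correct decomposition is $\sqrt{n}(\hbbeta_{1}^{\rm LPS}-\bbeta_{1})=\sqrt{n}(\hbbeta_{1}^{\rm LS}-\bbeta_{1})\,-\,\vartheta_{3}\bigl(1-(p_{2}-2)\mathscr{L}_{n}^{-1}\bigr)I(\mathscr{L}_{n}\leq p_{2}-2)$, with a minus sign, not the plus sign you wrote. With your sign the two correction terms come out reversed and do not reproduce the stated $\mathcal{B}(\hbbeta_{1}^{\rm LPS})$; with the minus sign, expanding by Lemma~\ref{lem_JB} gives $-\bdelta\bigl[H_{p_{2}+2}(p_{2}-2;\Delta)-(p_{2}-2)\mathcal{E}\{\chi^{-2}_{p_{2}+2}(\Delta)I(\chi^{2}_{p_{2}+2}(\Delta)\leq p_{2}-2)\}\bigr]$, which combined with $\mathcal{B}(\hbbeta_{1}^{\rm LS})$ yields exactly $-\bmu_{11.2}-\bdelta H_{p_{2}+2}(p_{2}-2;\Delta)-(p_{2}-2)\bdelta\,\mathcal{E}\{\chi^{-2}_{p_{2}+2}(\Delta)I(\chi^{2}_{p_{2}+2}(\Delta)>p_{2}-2)\}$, as in the paper's proof. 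Your remark on uniform integrability is a fair caveat about the $\mathcal{E}\lim$ definition (the paper does not address it), but it does not repair the two points above.
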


\begin{proof}
%[Proof of Theorem~\ref{ADB-Low-Linear}]

$\mathcal{B}\left( \hbbeta_{1}^{\rm LFM}\right)=-\boldsymbol{\mu }_{11.2}$ is provided by Proposition~\ref{pro1}, and

\begin{eqnarray*}
\mathcal{B}\left( \hbbeta_{1}^{\rm LSM} \right)
&=&\mathcal{E}\left\{ \underset{n\rightarrow \infty }{\lim }\sqrt{n}\left( \boldsymbol{%
\widehat{\beta}}_{1}^{\rm LSM} -\bbeta_{1}\right) \right\} \\
&=&\mathcal{E}\left\{ \underset{n\rightarrow \infty }{\lim }\sqrt{n}\left( \boldsymbol{%
\widehat{\beta}}_{1}^{\rm LFM} +\left(\boldsymbol{C_{11}+I_{p_1}}\right)^{-1}\left( \boldsymbol{C_{11}}+d\boldsymbol{I_{p_1}}\right)\bC_{12}%
\hbbeta_{2}^{\rm LFM} -\bbeta_{1}\right) \right\} \\
&=&\mathcal{E}\left\{ \underset{n\rightarrow \infty }{\lim }\sqrt{n}\left( \boldsymbol{%
\widehat{\beta}}_{1}^{\rm LFM} -\bbeta_{1}\right) \right\} +\mathcal{E}\left\{ \underset{n\rightarrow \infty }{\lim }\sqrt{n}\left(\left(\boldsymbol{C_{11}+I_{p_1}}\right)^{-1} \left( \boldsymbol{C_{11}}+d\boldsymbol{I_{p_1}}\right)\bC_{12}\hbbeta_{2}^{\rm LFM} \right) \right\} \\
&=&-\boldsymbol{\mu }_{11.2}+\bF_d^{11}\bC_{12}\boldsymbol{\kappa} =-\left( \boldsymbol{\mu }_{11.2}-\boldsymbol{\delta }\right)
=-\boldsymbol{\gamma } 
\end{eqnarray*}

Hence, by using Lemma~\ref{lem_JB}, it can be written
as follows:
\begin{eqnarray*}
\mathcal{B}\left( \hbbeta_{1}^{\rm LPT} \right)
&=&\mathcal{E}\left\{ \underset{n\rightarrow \infty }{\lim }\sqrt{n}\left( \boldsymbol{%
\widehat{\beta}}_{1}^{\rm LPT} -\bbeta_{1}\right) \right\} \\
&=&\mathcal{E}\left\{ \underset{n\rightarrow \infty }{\lim }\sqrt{n}\left( \boldsymbol{%
\widehat{\beta}}_{1}^{\rm LFM} -\left( \hbbeta_{1}^{\rm LFM} -%
\hbbeta_{1}^{\rm LSM} \right) I\left( \mathscr{L}_{n}\leq
c_{n,\alpha }\right) -\bbeta_{1}\right) \right\} \\
&=&\mathcal{E}\left\{ \underset{n\rightarrow \infty }{\lim }\sqrt{n}\left( \boldsymbol{%
\widehat{\beta}}_{1}^{\rm LFM} -\bbeta_{1}\right) \right\} \\
&&-\mathcal{E}\left\{ \underset{n\rightarrow \infty }{\lim }\sqrt{n}\left( \left(
\hbbeta_{1}^{\rm LFM} -\hbbeta_{1}^{\rm LSM}
\right) I\left( \mathscr{L}_{n}\leq c_{n,\alpha }\right) \right) \right\} \\
&=&-\boldsymbol{\mu }_{11.2}-\boldsymbol{\delta }H_{p_{2}+2}\left( \chi
_{p_{2},\alpha }^{2};\Delta \right) \text{.}
\end{eqnarray*}

\begin{eqnarray*}
\mathcal{B}\left( \hbbeta_{1}^{\rm LS} \right)
&=&\mathcal{E}\left\{ \underset{n\rightarrow \infty }{\lim }\sqrt{n}\left( \boldsymbol{%
\widehat{\beta}}_{1}^{\rm LS} -\bbeta _{1}\right) \right\} \\
&=&\mathcal{E}\left\{ \underset{n\rightarrow \infty }{\lim }\sqrt{n}\left( \boldsymbol{%
\widehat{\beta}}_{1}^{\rm LFM} -\left( \hbbeta_{1}^{\rm LFM} -%
\hbbeta_{1}^{\rm LSM} \right) \left( p_{2}-2\right) %
\mathscr{L}_{n}^{-1}-\bbeta_{1}\right) \right\} \\
&=&\mathcal{E}\left\{ \underset{n\rightarrow \infty }{\lim }\sqrt{n}\left( \boldsymbol{%
\widehat{\beta}}_{1}^{\rm LFM} -\bbeta_{1}\right) \right\}\\
&& -\mathcal{E}\left\{ \underset{n\rightarrow \infty }{\lim }\sqrt{n}\left( \left(
\hbbeta_{1}^{\rm LFM} -\hbbeta_{1}^{\rm LSM}
\right) \left( p_{2}-2\right) \mathscr{L}_{n}^{-1}\right) \right\} \\
&=&-\boldsymbol{\mu }_{11.2}-\left( p_{2}-2\right) \boldsymbol{\delta }%
\mathcal{E}\left( \chi _{p_{2}+2}^{-2}\left( \Delta \right) \right) \text{.}
\end{eqnarray*}
\begin{eqnarray*}
\mathcal{B}\left( \hbbeta_{1}^{\rm LPS} \right)
&=&\mathcal{E}\left\{ \underset{n\rightarrow \infty }{\lim }\sqrt{n}\left( \boldsymbol{%
\widehat{\beta}}_{1}^{\rm LPS} -\bbeta_{1}\right) \right\} \\
&=&\mathcal{E}\left\{ \underset{n\rightarrow \infty }{\lim }\sqrt{n}( \boldsymbol{%
\widehat{\beta}}_{1}^{\rm LSM} +\left( \hbbeta_{1}^{\rm LFM} -\boldsymbol{%
\widehat{\beta}}_{1}^{\rm LSM} \right) \left( 1-\left( p_{2}-2\right) \mathscr{L}%
_{n}^{-1}\right) \right. \\
&&\left. I\left( \mathscr{L}_{n}>p_{2}-2\right) -\bbeta_{1}) \right\} \\
&=&\mathcal{E}\left\{ \underset{n\rightarrow \infty }{\lim }\sqrt{n}\left[ \boldsymbol{%
\widehat{\beta}}_{1}^{\rm LSM} +\left( \hbbeta_{1}^{\rm LFM} -\boldsymbol{%
\widehat{\beta}}_{1}^{\rm LSM} \right) \left( 1-I\left( \mathscr{L}_{n}\leq
p_{2}-2\right) \right) \right. \right. \\
&&\left. \left. -\left( \hbbeta_{1}^{\rm LFM}
 -\hbbeta_{1}^{\rm LSM} \right) \left(
p_{2}-2\right) \mathscr{L}_{n}^{-1}I\left( \mathscr{L}_{n}>p_{2}-2\right)
-\bbeta_{1}\right] \right\} \\
&=&\mathcal{E}\left\{ \underset{n\rightarrow \infty }{\lim }\sqrt{n}\left( \boldsymbol{%
\widehat{\beta}}_{1}^{\rm LFM} -\bbeta_{1}\right) \right\} \\
&&-\mathcal{E}\left\{ \underset{n\rightarrow \infty }{\lim }\sqrt{n}\left( \boldsymbol{%
\widehat{\beta}}_{1}^{\rm LFM} -\hbbeta_{1}^{\rm RSM} \right)
I\left( \mathscr{L}_{n}\leq p_{2}-2\right) \right\} \\
&&-\mathcal{E}\left\{ \underset{n\rightarrow \infty }{\lim }\sqrt{n}\left( \boldsymbol{%
\widehat{\beta}}_{1}^{\rm RFM} -\hbbeta_{1}^{\rm LSM} \right) \left(
p_{2}-2\right) \mathscr{L}_{n}^{-1}I\left( \mathscr{L}_{n}>p_{2}-2\right)
\right\} \\
&=&-\boldsymbol{\mu }_{11.2}-\boldsymbol{\delta }H_{p_{2}+2}\left(p_{2}-2;\left( \Delta \right) \right) \\
&&-\boldsymbol{\delta }\left( p_{2}-2\right) \mathcal{E}\left\{ \chi
_{p_{2}+2}^{-2}\left( \Delta \right) I\left(\chi _{p_{2+2}}^{2}\left( \Delta
\right) >p_{2}-2\right)\right\} \text{.}	\qedhere		
\end{eqnarray*}

\end{proof}

Now, since we defined the asymptotic distributional quadratic bias of an estimator $\hbbeta_{1}^{\rm *}$ as follows
\begin{eqnarray*}
\mathcal{QB}\left( \hbbeta_{1}^{\rm *}\right)=\left( \mathcal{B}\left( \hbbeta_{1}^{\rm *}\right) \right)' \bS_{11.2}\left( \mathcal{B}\left( \hbbeta_{1}^{\rm *}\right) \right).
\end{eqnarray*}
The following asymptotic distributional biases of the estimators $\hbbeta_{1}^{\rm LFM}$, $\hbbeta_{1}^{\rm LSM}$, $\hbbeta_{1}^{\rm LPT}$, $\hbbeta_{1}^{\rm LS}$, and $\hbbeta_{1}^{\rm LPS}$ are obtained respectively,

\begin{eqnarray*}
\mathcal{QB}\left( \hbbeta_{1}^{\rm LFM}\right) &=&\boldsymbol{\mu
}_{11.2}^{\top}\bS_{11.2}\boldsymbol{\mu }_{11.2}, \\
\mathcal{QB}\left( \hbbeta_{1}^{\rm LSM}\right) &=&\boldsymbol{\gamma}^{\top}\bS_{11.2}\boldsymbol{\gamma},\\
\mathcal{QB}\left( \hbbeta_{1}^{\rm LPT}\right) &=&\boldsymbol{\mu
}_{11.2}^{\top}\bS_{11.2}\boldsymbol{\mu }_{11.2}+\boldsymbol{%
\mu }_{11.2}^{\top}\bS_{11.2}\boldsymbol{\delta }%
H_{p_{2}+2}\left( \chi _{p_{2},\alpha }^{2};\Delta \right) \\
&&+\boldsymbol{\delta }^{\top}\bS_{11.2}\boldsymbol{\mu }%
_{11.2}H_{p_{2}+2}\left( \chi _{p_{2},\alpha }^{2};\Delta \right) \\
&&+\boldsymbol{\delta }^{\top}\bS_{11.2}\boldsymbol{\delta }%
H_{p_{2}+2}^{2}\left( \chi _{p_{2},\alpha }^{2};\Delta \right), \\
\mathcal{QB}\left( \hbbeta_{1}^{\rm LS}\right) &=&\boldsymbol{\mu }%
_{11.2}^{\top}\bS_{11.2}\boldsymbol{\mu }_{11.2}+(p_{2}-2)%
\boldsymbol{\mu }_{11.2}^{\top}\bS_{11.2}\boldsymbol{\delta }%
\mathcal{E}\left( \chi _{p_{2}+2 }^{-2}\left( \Delta \right) \right) \\
&&+(p_{2}-2)\boldsymbol{\delta }^{\top}\bS_{11.2}\boldsymbol{\mu
}_{11.2}\mathcal{E}\left( \chi _{p_{2}+2 }^{-2}\left( \Delta \right) \right) \\
&&+(p_{2}-2)^{2}\boldsymbol{\delta }^{\top}\bS_{11.2}\boldsymbol{%
\delta }\left( \mathcal{E}\left( \chi _{p_{2}+2 }^{-2}\left( \Delta \right)
\right) \right) ^{2}, \\
\mathcal{QB}\left( \hbbeta_{1}^{\rm LPS}\right) &=&\boldsymbol{\mu
}_{11.2}^{\top}\bS_{11.2}\boldsymbol{\mu }_{11.2}+\left(
\boldsymbol{\delta }^{\top}\bS_{11.2}\boldsymbol{\mu }_{11.2}+%
\boldsymbol{\mu }_{11.2}^{\top}\bS_{11.2}\boldsymbol{\delta }%
\right) \\
&&\cdot \left[ H_{p_{2}+2}\left(p_{2}-2;\Delta \right) \right. \\
&&\left. +(p_{2}-2)\mathcal{E}\left\{ \chi _{p_{2}+2 }^{-2}\left( \Delta
\right) I\left( \chi _{p_{2}+2 }^{-2}\left( \Delta \right)
>p_{2}-2\right) \right\} \right] \\
&&+\boldsymbol{\delta }^{\top}\bS_{11.2}\boldsymbol{\delta }%
\left[ H_{p_{2}+2}\left(p_{2}-2;\Delta \right) \right. \\
&&\left. +(p_{2}-2)\mathcal{E}\left\{ \chi _{p_{2}+2 }^{-2}\left( \Delta
\right) I\left( \chi _{p_{2}+2 }^{-2}\left( \Delta \right)
>p_{2}-2\right) \right\} \right] ^{2}.
\end{eqnarray*}

Here we get similar pattern with the results of \cite{yuzbasi-et-al2017}.  So, we omit the details here.

In order to compute the risk functions, we firstly, compute the asymptotic covariance of the estimators. The asymptotic covariance of an estimator $\hbbeta_{1}^{\rm *}$ is obtained by
\begin{eqnarray*}
\boldsymbol{\Gamma} \left( \hbbeta_{1}^{\rm *} \right) &=&\mathcal{E}\left\{
\underset{n\rightarrow \infty }{\lim }\sqrt{n}\left( \boldsymbol{\widehat{%
\beta}}_{1}^{\rm *} -\bbeta_{1}\right) \sqrt{n}\left( \boldsymbol{\widehat{%
\beta}}_{1}^{\rm *} -\bbeta_{1}\right) ^{\top }\right\}. \\
\end{eqnarray*}

Now, we simply start by computing the asymptotic covariance of the estimator $\hbbeta_{1}^{\rm LFM}$ as follows:
\begin{eqnarray}\label{ACov_LFM}
\boldsymbol{\Gamma} \left( \hbbeta_{1}^{\rm LFM} \right) =\sigma ^{2}\bS_{11.2}^{-1}+\boldsymbol{\mu}_{11.2}\boldsymbol{%
\mu}_{11.2}^{\top }.
\end{eqnarray}
Similarly, the asymptotic covariance of the estimator $\hbbeta_{1}^{\rm LSM}$ is obtained as
\begin{eqnarray}\label{ACov_LSM}
\boldsymbol{\Gamma} \left( \hbbeta_{1}^{\rm LFM} \right) =\sigma ^{2}\bS_{11.2}^{-1}+\boldsymbol{\gamma}_{11.2}\boldsymbol{%
% * <yasinasar@hotmail.com> 2017-04-13T06:06:43.465Z:
% 
% > LFM
% Burası LSM olacak...
% 
% ^.
\gamma}_{11.2}^{\top }.
\end{eqnarray}

The asymptotic covariance of the estimator $\hbbeta_{1}^{\rm LPT}$ can be obtained by computing the following
\begin{eqnarray*}
\boldsymbol{\Gamma} \left( \hbbeta_{1}^{\rm LPT} \right)
&=&\mathcal{E}\left\{ \underset{n\rightarrow \infty }{\lim }\sqrt{n}\left( \boldsymbol{%
\widehat{\beta}}_{1}^{\rm LPT} -\bbeta_{1}\right) \sqrt{n}\left( \boldsymbol{%
\widehat{\beta}}_{1}^{\rm LPT} -\bbeta_{1}\right) ^{\top }\right\} \nonumber \\
&=&\mathcal{E}\left\{ \underset{n\rightarrow \infty }{\lim }n\left[ \left( \boldsymbol{%
\widehat{\beta}}_{1}^{\rm LFM} -\bbeta_{1}\right) -\left( \boldsymbol{\widehat{%
\beta}}_{1}^{\rm LFM} -\hbbeta_{1}^{\rm LSM} \right) I\left( %
\mathscr{L}_{n}\leq c_{n,\alpha }\right) \right] \right. \nonumber\\
&&\left. \left[ \left( \hbbeta_{1}^{\rm LFM} -\bbeta_{1}\right) -\left( \hbbeta_{1}^{\rm LFM} -\boldsymbol{%
\widehat{\beta}}_{1}^{\rm LSM} \right) I\left( \mathscr{L}_{n}\leq c_{n,\alpha
}\right) \right] ^{\top }\right\} \nonumber\\
&=&\mathcal{E}\left\{ \left[ \vartheta _{1}-\vartheta _{3}I\left( \mathscr{L}_{n}\leq
c_{n,\alpha }\right) \right] \left[ \vartheta _{1}-\vartheta _{3}I\left( %
\mathscr{L}_{n}\leq c_{n,\alpha }\right) \right] ^{\top }\right\} \nonumber\\
&=&\mathcal{E}\left\{ \vartheta _{1}\vartheta _{1}^{\top }-2\vartheta _{3}\vartheta
_{1}^{\top }I\left( \mathscr{L}_{n}\leq c_{n,\alpha }\right) +\vartheta
_{3}\vartheta _{3}^{\top }I\left( \mathscr{L}_{n}\leq c_{n,\alpha }\right)
\right\} \text{.}
\end{eqnarray*}

Thus, we need to compute $\mathcal{E}\left\{ \vartheta _{1}\vartheta _{1}^{\top }
\right\}$, $\mathcal{E}\left\{\vartheta _{3}\vartheta
_{1}^{\top }I\left( \mathscr{L}_{n}\leq c_{n,\alpha }\right)\right\}$ and $\mathcal{E}\left\{ \vartheta
_{3}\vartheta _{3}^{\top }I\left( \mathscr{L}_{n}\leq c_{n,\alpha }\right)\right\}$.
Since the first term is $\mathcal{E}\left\{ \vartheta _{1}\vartheta _{1}^{\top }
\right\}=\sigma ^{2}\bS_{11.2}^{-1}+\boldsymbol{\mu}_{11.2}\boldsymbol{%
\mu}_{11.2}^{\top }$, by using Lemma (\ref{lem_JB}), we compute the third term as
$$ \mathcal{E}\left\{ \vartheta _{3}\vartheta _{3}^{\top }I\left( \mathscr{L}_{n}\leq
c_{n,\alpha }\right) \right\}= \boldsymbol{\Phi }H_{p_{2}+2}\left( \chi _{p_{2},\alpha }^{2};\Delta \right)+\boldsymbol{\delta \delta }^{\top }H_{p_{2}+4}\left( \chi _{p_{2},\alpha
}^{2};\Delta \right).$$

Finally, we use the formula of a conditional mean of a bivariate normal distribution and obtain 
\begin{eqnarray*}
\lefteqn{\mathcal{E}\left\{ \vartheta _{3}\vartheta _{1}^{\top }I\left( \mathscr{L}_{n}\leq
c_{n,\alpha }\right) \right\}}\\
&=&\mathcal{E}\left\{ \mathcal{E}\left( \vartheta _{3}\vartheta _{1}^{\top }I\left( \mathscr{L}%
_{n}\leq c_{n,\alpha }\right) |\vartheta _{3}\right) \right\}
=\mathcal{E}\left\{ \vartheta _{3}\mathcal{E}\left( \vartheta _{1}^{\top }I\left( \mathscr{L}%
_{n}\leq c_{n,\alpha }\right) |\vartheta _{3}\right) \right\} \\
&=&\mathcal{E}\left\{ \vartheta _{3}\left[ -\mu _{11.2}+\left( \vartheta _{3}-%
\boldsymbol{\delta }\right) \right] ^{\top }I\left( \mathscr{L}_{n}\leq
c_{n,\alpha }\right) \right\} \\
&=&-\mathcal{E}\left\{ \vartheta _{3}\boldsymbol{\mu }_{11.2}^{\top }I\left( %
\mathscr{L}_{n}\leq c_{n,\alpha }\right) \right\}
+\mathcal{E}\left\{ \vartheta _{3}\left( \vartheta _{3}-\boldsymbol{\delta }\right)
^{\top }I\left( \mathscr{L}_{n}\leq c_{n,\alpha }\right) \right\} \\
&=&-\boldsymbol{\mu }_{11.2}^{\top }\mathcal{E}\left\{ \vartheta _{3}I\left( %
\mathscr{L}_{n}\leq c_{n,\alpha }\right) \right\}
+\mathcal{E}\left\{ \vartheta _{3}\vartheta _{3}^{\top }I\left( \mathscr{L}_{n}\leq
c_{n,\alpha }\right) \right\}
-\mathcal{E}\left\{ \vartheta _{3}\boldsymbol{\delta }^{\top }I\left( \mathscr{L}%
_{n}\leq c_{n,\alpha }\right) \right\}.
\end{eqnarray*}

Now, putting all the terms together and after some easy algebra, we obtain
\begin{eqnarray}\label{ACov_LPT}
\boldsymbol{\Gamma} \left( \hbbeta_{1}^{\rm LPT} \right)
&=& \sigma ^{2}\bS_{11.2}^{-1}+\boldsymbol{\mu }_{11.2}\boldsymbol{%
\mu }_{11.2}^{\top }+2\boldsymbol{\mu }_{11.2}^{\top }\boldsymbol{\delta }%
H_{p_{2}+2}\left( \chi _{p_{2},\alpha }^{2};\Delta \right) \nonumber\\
&&-\boldsymbol{\Phi }H_{p_{2}+2}\left( \chi _{p_{2},\alpha }^{2};
\Delta  \right) +\boldsymbol{\delta \delta }^{\top }\left[ 2H_{p_{2}+2}\left( \chi
_{p_{2},\alpha }^{2};\Delta \right) -H_{p_{2}+4}\left( \chi _{p_{2},\alpha
}^{2};\Delta \right) \right].
\end{eqnarray}

The asymptotic covariance of $\hbbeta_{1}^{\rm LS}$ can be obtained by
\begin{eqnarray*}
\boldsymbol{\Gamma} \left( \hbbeta_{1}^{\rm LS}\right)
&=&\mathcal{E}\left\{ \underset{n\rightarrow \infty }{\lim }\sqrt{n}\left( \boldsymbol{%
\widehat{\beta }}_{1}^{\rm LS}-\bbeta_{1}\right) \sqrt{n}\left(
\hbbeta_{1}^{\rm LS}-\bbeta_{1}\right) ^{\top
}\right\}  \\
&=&\mathcal{E}\left\{ \underset{n\rightarrow \infty }{\lim }n\left[ \left( \boldsymbol{%
\widehat{\beta }}_{1}^{\rm LFM}-\bbeta_{1}\right) -\left(
\hbbeta_{1}^{\rm LFM}-\hbbeta%
_{1}^{\rm LSM}\right) \left( p_{2}-2\right) \mathscr{L}_{n}^{-1}\right] \right.
\\
&&\left. \left[ \left( \hbbeta_{1}^{\rm LFM}-\boldsymbol{%
\beta }_{1}\right) -\left( \hbbeta_{1}^{\rm LFM}-\boldsymbol{%
\widehat{\beta }}_{1}^{\rm LSM}\right) \left( p_{2}-2\right) \mathscr{L}_{n}^{-1}%
\right] ^{\top }\right\}  \\
&=&\mathcal{E}\left\{ \vartheta _{1}\vartheta _{1}^{\top }-2\left( p_{2}-2\right)
\vartheta _{3}\vartheta _{1}^{\top }\mathscr{L}_{n}^{-1}+\left(
p_{2}-2\right) ^{2}\vartheta _{3}\vartheta _{3}^{\top }\mathscr{L}%
_{n}^{-2}\right\} \text{.}
\end{eqnarray*}%
Thus, we need to compute $\mathcal{E}\left\{\vartheta _{3}\vartheta _{3}^{\top } \mathscr{L}%
_{n}^{-2}\right\}$ and $\mathcal{E}\left\{\vartheta _{3}\vartheta _{1}^{\top }\mathscr{L}%
_{n}^{-1} \right\}$.
By using Lemma (\ref{lem_JB}), the first one is obtained as follows:
$$\mathcal{E}\left\{\vartheta _{3}\vartheta _{3}^{\top } \mathscr{L}%
_{n}^{-2}\right\}=\boldsymbol{\Phi }_{
}\mathcal{E}\left( \chi _{p_{2}+2}^{-4}\left( \Delta \right) \right) +\boldsymbol{\delta \delta }^{\top }\mathcal{E}\left( \chi _{p_{2}+4}^{-4}\left(
\Delta \right) \right).$$
To compute the second one, we again need the formula of a conditional mean of a bivariate normal distribution and get
\begin{eqnarray*}
\mathcal{E}\left\{ \vartheta _{3}\vartheta _{1}^{\top }\mathscr{L}_{n}^{-1}\right\}
&=&\mathcal{E}\left\{ \mathcal{E}\left( \vartheta _{3}\vartheta _{1}^{\top }\mathscr{L}%
_{n}^{-1}|\vartheta _{3}\right) \right\}
=\mathcal{E}\left\{ \vartheta _{3}\mathcal{E}\left( \vartheta _{1}^{\top }\mathscr{L}%
_{n}^{-1}|\vartheta _{3}\right) \right\}  \\
&=&\mathcal{E}\left\{ \vartheta _{3}\left[ -\boldsymbol{\mu }_{11.2}+\left( \vartheta
_{3}-\boldsymbol{\delta }\right) \right] ^{\top }\mathscr{L}%
_{n}^{-1}\right\} \\
&=&-\mathcal{E}\left\{ \vartheta _{3}\boldsymbol{\mu }_{11.2}^{\top }\mathscr{L}%
_{n}^{-1}\right\} +\mathcal{E}\left\{ \vartheta _{3}\left( \vartheta _{3}-\boldsymbol{%
\delta }\right) ^{\top }\mathscr{L}_{n}^{-1}\right\} \\
&=&-\boldsymbol{\mu }_{11.2}^{\top }\mathcal{E}\left\{ \vartheta _{3}\mathscr{L}%
_{n}^{-1}\right\} +\mathcal{E}\left\{ \vartheta _{3}\vartheta _{3}^{\top }\mathscr{L}%
_{n}^{-1}\right\}
-\mathcal{E}\left\{ \vartheta _{3}\boldsymbol{\delta }^{\top }\mathscr{L}%
_{n}^{-1}\right\}.
\end{eqnarray*}%
We also have $\mathcal{E}\left\{ \vartheta _{3}\boldsymbol{\delta }^{\top }\mathscr{L}%
_{n}^{-1}\right\}= \boldsymbol{\delta \delta}^{\top}\mathcal{E}\left( \chi _{p_{2}+2}^{-2}\left( \Delta \right) \right)$  
and $\mathcal{E}\left\{ \vartheta _{3}\mathscr{L}_{n}^{-1}\right\}= \boldsymbol{\delta}\mathcal{E}\left( \chi _{p_{2}+2}^{-2}\left( \Delta \right) \right)$.
Therefore, after some algebra we get
\begin{eqnarray}\label{ACov_LS}
\lefteqn{\boldsymbol{\Gamma} \left( \hbbeta_{1}^{\rm LS}\right)}\\
&=&\sigma ^{2}\bS_{11.2}^{-1}+\boldsymbol{\mu }_{11.2}\boldsymbol{%
\mu }_{11.2}^{\top }+2\left( p_{2}-2\right) \boldsymbol{\mu }_{11.2}^{\top }%
\boldsymbol{\delta }\mathcal{E}\left( \chi _{p_{2+2}}^{-2}\left( \Delta
\right) \right) \nonumber \\
&&-\left( p_{2}-2\right) \boldsymbol{\Phi }\left\{ 2\mathcal{E}\left( \chi
_{p_{2}+2}^{-2}\left( \Delta \right) \right) -\left( p_{2}-2\right) \mathcal{E}\left(
\chi _{p_{2+2}}^{-4}\left( \Delta \right) \right) \right\} \nonumber\\
&&+\left( p_{2}-2\right) \boldsymbol{\delta \delta }^{\top }\left\{
-2\mathcal{E}\left( \chi _{p_{2+4}}^{-2}\left( \Delta \right) \right) +2\mathcal{E}\left( \chi
_{p_{2}+2}^{-2}\left( \Delta \right) \right) +\left( p_{2}-2\right) \mathcal{E}\left(
\chi _{p_{2+4}}^{-4}\left( \Delta \right) \right) \right\} \text{.}
\end{eqnarray}

Finally, we compute the asymptotic covariance of $\boldsymbol{\Gamma} \left( \hbbeta_{1}^{\rm LPS}\right)$ as follows:

\begin{eqnarray*}
\boldsymbol{\Gamma} \left( \hbbeta_{1}^{\rm LPS} \right)
&=&\mathcal{E}\left\{ \underset{n\rightarrow \infty }{\lim }n\left( \hbbeta_{1}^{\rm LPS} -\bbeta_{1}\right) \left( \hbbeta_{1}^{\rm LPS}
-\bbeta_{1}\right) ^{\top}\right\} \\
&=&\mathcal{E}\left\{ \underset{n\rightarrow \infty }{\lim }n\left( \hbbeta_{1}^{\rm LS}-\bbeta_{1}\right)\left( \hbbeta_{1}^{\rm LS}-\bbeta_{1}\right)^{\top} \right\} \\
&&- 2\mathcal{E}\left\{\underset{n\rightarrow \infty }{\lim }n \left( \hbbeta_{1}^{\rm LFM}-\hbbeta_{1}^{\rm LSM}\right)\left( \hbbeta_{1}^{\rm LS}-\hbbeta_{1}\right)^{\top}\left\{ 1-\left( p_{2}-2\right) \mathscr{L}%
_{n}^{-1}\right\}I\left( \mathscr{L}_{n}\leq p_{2}-2\right) \right\}\\
&&+\mathcal{E}\left\{\underset{n\rightarrow \infty }{\lim }n \left( \hbbeta_{1}^{\rm LFM}-\hbbeta_{1}^{\rm LSM}\right)\left( \hbbeta_{1}^{\rm LFM}-\hbbeta_{1}^{\rm LSM}\right)^{\top}\left\{ 1-\left( p_{2}-2\right) \mathscr{L}%
_{n}^{-1}\right\}^2 I\left( \mathscr{L}_{n}\leq p_{2}-2\right)\right\}\\
&=& \boldsymbol{\Gamma} \left( \hbbeta_{1}^{\rm LS} \right)-2\mathcal{E}\left\{ \vartheta _{3}\vartheta _{1}^{\top }\left[1-\left( p_{2}-2\right)\mathscr{L}_{n}^{-1}\right]I\left( \mathscr{L}_{n}\leq p_{2}-2\right)\right\}\\
&& +2\mathcal{E}\left\{ \vartheta _{3}\vartheta _{3}^{\top }\left( p_{2}-2\right)\mathscr{L}_{n}^{-1}\left(1-\left( p_{2}-2\right)\mathscr{L}_{n}^{-1}\right) I\left( \mathscr{L}_{n}\leq p_{2}-2\right)\right\}\\
&&+\mathcal{E}\left\{\vartheta _{3}\vartheta _{3}^{\top }\left(1-\left( p_{2}-2\right)\mathscr{L}_{n}^{-1}\right)^2 I\left( \mathscr{L}_{n}\leq p_{2}-2\right)\right\}.
\end{eqnarray*}
Simplifying the equation above, we get
\begin{eqnarray*}
\boldsymbol{\Gamma} \left( \hbbeta_{1}^{\rm LPS} \right)&=& \boldsymbol{\Gamma} \left( \hbbeta_{1}^{\rm LS} \right)-2\mathcal{E}\left\{ \vartheta _{3}\vartheta _{1}^{\top }\left[1-\left( p_{2}-2\right)\mathscr{L}_{n}^{-1}\right]I\left( \mathscr{L}_{n}\leq p_{2}-2\right)\right\}\\
&&-\mathcal{E}\left\{ \vartheta _{3}\vartheta _{3}^{\top }\left( p_{2}-2\right)^2\mathscr{L}_{n}^{-2} I\left( \mathscr{L}_{n}\leq p_{2}-2\right)\right\}+\mathcal{E}\left\{ \vartheta _{3}\vartheta _{3}^{\top }I\left( \mathscr{L}_{n}\leq p_{2}-2\right)\right\}.\\
\end{eqnarray*}
Now, we need to compute the expectations obtained in the above equation. We firstly compute the last one as
$$\mathcal{E}\left\{ \vartheta _{3}\vartheta _{3}^{\top }I\left( \mathscr{L}_{n}\leq p_{2}-2\right)\right\}= \boldsymbol{\Phi }H_{p_{2}+2}\left( p_2-2;\Delta \right)+\boldsymbol{\delta \delta }^{\top }H_{p_{2}+4}\left( p_2-2;\Delta \right)$$

By using Lemma(\ref{lem_JB}) and using the formula of a conditional mean of a bivariate normal distribution, the first expectation becomes
\begin{eqnarray*}
\lefteqn{\mathcal{E}\left\{ \vartheta _{3}\vartheta _{1}^{\top }\left[1-\left( p_{1}-2\right)\mathscr{L}_{n}^{-1}\right]I\left( \mathscr{L}_{n}\leq p_{2}-2\right)\right\}}\\
=&-& \boldsymbol{\delta \mu }_{11.2}^{\top }\mathcal{E}\left( \left\{
1-\left( p_{2}-2\right) \chi _{p_{2}+2}^{-2}\left( \Delta \right) \right\}
I\left( \chi _{p_{2}+2}^{2}\left( \Delta \right) \leq p_{2}-2\right) \right)
\\
&+&\boldsymbol{\Phi }\mathcal{E}\left( \left\{ 1-\left( p_{2}-2\right) \chi
_{p_{2}+2}^{-2}\left( \Delta \right) \right\} I\left( \chi
_{p_{2}+2}^{2}\left( \Delta \right) \leq p_{2}-2\right) \right) \\
&+&\boldsymbol{\delta \delta }^{\top }\mathcal{E}\left( \left\{ 1-\left(
p_{2}-2\right) \chi _{p_{2}+4}^{-2}\left( \Delta \right) \right\}
I\left( \chi _{p_{2}+4}^{2}\left( \Delta \right) \leq p_{2}-2\right) \right)
\\
&-&\boldsymbol{\delta \delta }^{\top }\mathcal{E}\left( \left\{ 1-\left(
p_{2}-2\right) \chi _{p_{2}+2}^{-2}\left( \Delta \right) \right\}
I\left( \chi _{p_{2}+2}^{2}\left( \Delta \right) \leq p_{2}-2\right) \right).
\end{eqnarray*} 
Thus, the asymptotic covariance of $\boldsymbol{\Gamma}\left(\hbbeta_{1}^{\rm LPS}\right)$ can be written as follows:

\begin{eqnarray}\label{ACov_PLS}
\boldsymbol{\Gamma} \left( \hbbeta_{1}^{\rm LPS} \right)
&=&\boldsymbol{\Gamma} \left( \hbbeta_{1}^{\rm RS} \right) +2\boldsymbol{%
\delta \mu }_{11.2}^{\top }\mathcal{E}\left( \left\{ 1-\left( p_{2}-2\right) \chi
_{p_{2}+2}^{-2}\left( \Delta \right) \right\} I\left( \chi
_{p_{2}+2}^{2}\left( \Delta \right) \leq  p_{2}-2\right)
\right)\nonumber \\
&&-2\boldsymbol{\Phi }\mathcal{E}\left( \left\{ 1-\left( p_{2}-2\right) \chi
_{p_{2}+2}^{-2}\left( \Delta \right) \right\} I\left( \chi
_{p_{2}+2}^{-2}\left( \Delta \right) \leq p_{2}-2\right) \right) \nonumber \\
&&-2\boldsymbol{\delta \delta }^{\top }\mathcal{E}\left( \left\{ 1-\left(
p_{2}-2\right) \chi _{p_{2}+4}^{-2}\left( \Delta \right) \right\} I\left(
\chi _{p_{2}+4}^{2}\left( \Delta \right) \leq p_{2}-2\right) \right) \nonumber \\
&&+2\boldsymbol{\delta \delta }^{\top }\mathcal{E}\left( \left\{ 1-\left(
p_{2}-2\right) \chi _{p_{2}+2}^{-2}\left( \Delta \right) \right\} I\left(
\chi _{p_{2}+2}^{2}\left( \Delta \right) \leq p_{2}-2\right) \right)  \\
&&-\left( p_{2}-2\right) ^{2}\boldsymbol{\Phi }\mathcal{E}\left( \chi
_{p_{2}+2,\alpha }^{-4}\left( \Delta \right) I\left( \chi _{p_{2}+2,\alpha
}^{2}\left( \Delta \right) \leq p_{2}-2\right) \right) \nonumber \\
&&-\left( p_{2}-2\right) ^{2}\boldsymbol{\delta \delta }^{\top }\mathcal{E}\left( \chi
_{p_{2}+4}^{-4}\left( \Delta \right) I\left( \chi _{p_{2}+2}^{2}\left(
\Delta \right) \leq p_{2}-2\right) \right) \nonumber \\
&&+\boldsymbol{\Phi }H_{p_{2}+2}\left( p_{2}-2;\Delta \right) +%
\boldsymbol{\delta \delta }^{\top }H_{p_{2}+4}\left( p_{2}-2;\Delta \right).\nonumber
\end{eqnarray}

Based on the computations regarding the asymptotic covariances and using the equations (\ref{ACov_LFM}), (\ref{ACov_LSM}), (\ref{ACov_LPT}), (\ref{ACov_LS}) and (\ref{ACov_PLS}), we present the risks of the estimators $\hbbeta_{1}^{\rm LFM}$, $\hbbeta_{1}^{\rm LSM}$, $\hbbeta_{1}^{\rm LPT}$, $\hbbeta_{1}^{\rm LS}$ and $\hbbeta_{1}^{\rm LPS}$ respectively in the following theorem.
\begin{theorem} Under the local alternatives $\left\{ K_{n}\right\} $ and assuming the regularity conditions (i) and (ii), the risks of the estimators are:
\label{ADR-Low-Linear}
\begin{eqnarray*}
\mathcal{R}\left( \hbbeta_{1}^{\rm LFM}\right)&=&
\sigma^{2}tr\left( \boldsymbol{W}\bS_{11.2}^{-1}\right) +\boldsymbol{\mu }%
_{11.2}^{\top}\boldsymbol{W\mu }_{11.2}\\
\mathcal{R}\left( \hbbeta_{1}^{\rm LSM}\right)&=&
\sigma ^{2}\textnormal{tr}\left( \boldsymbol{W}\bS_{11}^{-1}\right) +\boldsymbol{\gamma}%
^{\top}\boldsymbol{W\gamma}\\
\mathcal{R}\left( \hbbeta_{1}^{\rm LPT}\right)  &=&\mathcal{R}\left(
\hbbeta_{1}^{\rm LFM}\right) +2\boldsymbol{\mu }%
_{11.2}^{\top}\boldsymbol{W\delta }H_{p_{2}+2}\left( \chi _{p_{2},\alpha
}^{2};\Delta \right)-\textnormal{tr}(\boldsymbol{W\Phi})H_{p_{2}+2}\left( \chi _{p_{2},\alpha }^{2};\Delta\right) \\
&&+\boldsymbol{\delta }^{\top}\boldsymbol{W\delta }\left\{
2H_{p_{2}+2}\left( \chi _{p_{2},\alpha }^{2};\Delta \right)
-H_{p_{2}+4}\left( \chi _{p_{2},\alpha }^{2};\Delta \right) \right\},\\
%\end{eqnarray*}
%\begin{eqnarray*}
\mathcal{R}\left( \hbbeta_{1}^{\rm LS}\right)  &=&\mathcal{R}\left(
\hbbeta_{1}^{\rm LFM}\right) +2(p_{2}-2)\boldsymbol{\mu }%
_{11.2}^{\top}\boldsymbol{W\delta }\mathcal{E}\left( \chi _{p_{2}+2
}^{-2}\left( \Delta \right) \right)  \\
&&-(p_{2}-2)\textnormal{tr}\left(\boldsymbol{W\Phi}\right) \left[ \mathcal{E}\left( \chi _{p_{2}+2}^{-2}\left( \Delta \right) \right) -(p_{2}-2)\mathcal{E}\left( \chi _{p_{2}+2 }^{-4}\left( \Delta \right)
\right) \right] \\
&&+(p_{2}-2)\boldsymbol{\delta }^{\top}\boldsymbol{W\delta }\left[2\mathcal{E}\left( \chi
_{p_{2}+2}^{-2}\left( \Delta \right) \right) -2\mathcal{E}\left( \chi _{p_{2}+4 }^{-2}\left( \Delta \right) \right)
+(p_{2}-2)\mathcal{E}\left( \chi _{p_{2}+4 }^{-4}\left( \Delta \right) \right)
\right],\\
%\end{eqnarray*}
%\begin{eqnarray*}
\mathcal{R}\left( \hbbeta_{1}^{\rm LPS}\right)  &=&\mathcal{R}\left(
\hbbeta_{1}^{\rm RS}\right)+2\boldsymbol{\mu }_{11.2}^{\top}\boldsymbol{W\delta }\mathcal{E}\left( \left\{1-(p_{2}-2)\chi _{p_{2}+2 }^{-2}\left( \Delta \right) \right\}
I\left( \chi _{p_{2}+2 }^{2}\left( \Delta \right) \leq
p_{2}-2\right) \right)  \\
&&+\textnormal{tr}\left(\boldsymbol{W\Phi}\right)\mathcal{E}\left( \left\{ 1-\left( p_{2}-2\right) \chi
_{p_{2}+2}^{-2}\left( \Delta \right) \right\} I\left( \chi
_{p_{2}+2}^{-2}\left( \Delta \right) \leq p_{2}-2\right) \right)\\
&&-2\boldsymbol{\delta^{\top }W \delta }\mathcal{E}\left( \left\{ 1-\left(
p_{2}-2\right) \chi _{p_{2}+4}^{-2}\left( \Delta \right) \right\} I\left(
\chi _{p_{2}+4}^{2}\left( \Delta \right) \leq p_{2}-2\right) \right)  \\
&&+2\boldsymbol{\delta^{\top }W \delta }\mathcal{E}\left( \left\{ 1-\left(
p_{2}-2\right) \chi _{p_{2}+2}^{-2}\left( \Delta \right) \right\} I\left(
\chi _{p_{2}+2}^{2}\left( \Delta \right) \leq p_{2}-2\right) \right)  \\
&&-\left( p_{2}-2\right) ^{2}\textnormal{tr}\left(\boldsymbol{W\Phi}\right)\mathcal{E}\left( \chi
_{p_{2}+2,\alpha }^{-4}\left( \Delta \right) I\left( \chi _{p_{2}+2,\alpha
}^{2}\left( \Delta \right) \leq p_{2}-2\right) \right) \nonumber \\
&&-\left( p_{2}-2\right) ^{2}\boldsymbol{\delta^{\top }W \delta }\mathcal{E}\left( \chi
_{p_{2}+4}^{-4}\left( \Delta \right) I\left( \chi _{p_{2}+2}^{2}\left(
\Delta \right) \leq p_{2}-2\right) \right) \nonumber \\
&&+\textnormal{tr}\left(\boldsymbol{W\Phi}\right)H_{p_{2}+2}\left( p_{2}-2;\Delta \right) +%
\boldsymbol{\delta^{\top }W \delta }H_{p_{2}+4}\left( p_{2}-2;\Delta \right).
\end{eqnarray*}%
\end{theorem}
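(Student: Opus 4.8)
The plan is to read off each risk directly from the definition $\mathcal{R}(\hbbeta_{1}^{\ast})=\mathrm{tr}\!\left(\bW\,\bGamma(\hbbeta_{1}^{\ast})\right)$ by substituting the asymptotic covariance matrices already established in \eqref{ACov_LFM}, \eqref{ACov_LSM}, \eqref{ACov_LPT}, \eqref{ACov_LS} and \eqref{ACov_PLS} and simplifying with elementary trace identities. The only tools needed are linearity of the trace, the identity $\mathrm{tr}(\bW\,\ba\bb^{\top})=\bb^{\top}\bW\ba$, and, since $\bW$ is symmetric, $\mathrm{tr}\!\left(\bW(\ba\bb^{\top}+\bb\ba^{\top})\right)=2\,\ba^{\top}\bW\bb$; all scalar multipliers appearing in the covariance expressions (the $H_{p_{2}+k}(\cdot\,;\Delta)$ factors and the $\mathcal{E}(\chi_{p_{2}+k}^{-2j}(\Delta))$ factors, possibly truncated by an indicator) are constants with respect to the trace and therefore factor out.

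First I would treat $\hbbeta_{1}^{\rm LFM}$ and $\hbbeta_{1}^{\rm LSM}$: applying $\mathrm{tr}(\bW\,\cdot)$ to \eqref{ACov_LFM} and \eqref{ACov_LSM} turns $\sigma^{2}\bS_{11.2}^{-1}$ into $\sigma^{2}\mathrm{tr}(\bW\bS_{11.2}^{-1})$ (respectively $\sigma^{2}\mathrm{tr}(\bW\bS_{11}^{-1})$) and the rank-one pieces $\bmu_{11.2}\bmu_{11.2}^{\top}$ and $\bgamma\bgamma^{\top}$ into $\bmu_{11.2}^{\top}\bW\bmu_{11.2}$ and $\bgamma^{\top}\bW\bgamma$, which are exactly the first two displayed risks. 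For $\hbbeta_{1}^{\rm LPT}$, $\hbbeta_{1}^{\rm LS}$ and $\hbbeta_{1}^{\rm LPS}$, the covariances in \eqref{ACov_LPT}, \eqref{ACov_LS} and \eqref{ACov_PLS} are already organized as $\bGamma(\hbbeta_{1}^{\rm LFM})$ (respectively $\bGamma(\hbbeta_{1}^{\rm LS})$) plus a finite sum of terms, each of one of the three shapes $c\,\bPhi$, $c\,\bdelta\bdelta^{\top}$, or $c\,(\bmu_{11.2}\bdelta^{\top}+\bdelta\bmu_{11.2}^{\top})$ with $c$ a scalar weight. Applying $\mathrm{tr}(\bW\,\cdot)$ term by term replaces $\bPhi$ by $\mathrm{tr}(\bW\bPhi)$, $\bdelta\bdelta^{\top}$ by $\bdelta^{\top}\bW\bdelta$, and the symmetrized cross term by $2\,\bmu_{11.2}^{\top}\bW\bdelta$, each still multiplied by $c$; collecting the results and using $\mathcal{R}(\hbbeta_{1}^{\rm LFM})=\sigma^{2}\mathrm{tr}(\bW\bS_{11.2}^{-1})+\bmu_{11.2}^{\top}\bW\bmu_{11.2}$ reproduces the stated expressions for $\mathcal{R}(\hbbeta_{1}^{\rm LPT})$ and $\mathcal{R}(\hbbeta_{1}^{\rm LS})$, and — after writing $\mathcal{R}(\hbbeta_{1}^{\rm LPS})$ in terms of $\mathcal{R}(\hbbeta_{1}^{\rm LS})$ — the expression for $\mathcal{R}(\hbbeta_{1}^{\rm LPS})$.

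No new analytic input beyond Lemma~\ref{lem_JB}, Proposition~\ref{pro1} and the covariance computations preceding the theorem is required, so the argument is essentially bookkeeping. The main obstacle is purely organizational: in the $\hbbeta_{1}^{\rm LPS}$ case one must carry along the numerous truncated conditional-expectation factors attached to $I(\mathscr{L}_{n}\le p_{2}-2)$ in \eqref{ACov_PLS}, keep the degrees-of-freedom indices $p_{2}+2$ versus $p_{2}+4$ straight, and combine these with the terms inherited from $\bGamma(\hbbeta_{1}^{\rm LS})$ without sign slips. Once \eqref{ACov_PLS} has been transcribed correctly under the trace, the final formula for $\mathcal{R}(\hbbeta_{1}^{\rm LPS})$ follows immediately.
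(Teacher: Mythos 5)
Your proposal is correct and is essentially the paper's own argument: the paper likewise obtains Theorem~\ref{ADR-Low-Linear} by applying $\mathcal{R}(\hbbeta_{1}^{\ast})=\textnormal{tr}\left(\boldsymbol{W}\boldsymbol{\Gamma}(\hbbeta_{1}^{\ast})\right)$ to the covariance expressions (\ref{ACov_LFM}), (\ref{ACov_LSM}), (\ref{ACov_LPT}), (\ref{ACov_LS}) and (\ref{ACov_PLS}), with the trace identities you cite doing the bookkeeping. No gap; the only difference is that you spell out the trace step (and implicitly use $\bS_{11}^{-1}$ for the sub-model covariance, consistent with Proposition~\ref{pro1}) which the paper leaves implicit.
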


The risk comparison of biased full model, sub model, pretest and shrinkage estimators have been discussed in \cite{yuzbasi-et-al2017}. Since we get similar results here, the details of discussion are omitted. In order to compare the relative risks of estimators, we implemented a Monte Carlo simulation study as the following section.

\section{Simulation}
In this section, we consider a Monte Carlo simulation
to evaluate the performance of the suggested estimators. 
%All calculations were carried out in \citet{R2010}. 
The response is obtained from the following model:
\begin{equation}
y_{i}=x_{1i}\beta _{1}+x_{2i}\beta _{2}+...+x_{pi}\beta _{p}+\varepsilon _{i},\ i=1,\ldots,n,  \label{sim.mod}
\end{equation}%
where $\varepsilon _{i}$ are i.i.d. $%
\mathcal{N}\left( 0,1\right)$, and the design matrix is generated from a multivariate normal distribution with zero mean and covariance matrix $\bm{\Sigma }_{x}$. Here, we consider that the off-diagonal elements of the covariance
matrix are considered to be equal to $\rho$. Furthermore, we consider the condition number (CN) value, which is defined as the ratio of the largest eigenvalue to the smallest eigenvalue of matrix $\bX^{\top}\bX$, to assess the multicollinearity. \citet{belsley1991} suggest that the data has multicollinearity if the CN value is larger than $30$.

\begin{itemize}
\item We consider the sample size $n=50,100$
\item $\rho=0.3, 0.6, 0.9$
\item We also consider that the regression coefficients are set $\bbeta=\left( \bbeta%
_{1}^{\top},\bbeta_{2}^{\top}\right)^{\top} =\left( \bbeta_{1}^{\top},%
\bm{0}_{p_2}^{\top}\right)^{\top}$ with $\bbeta_{1}=(\underset{p_1}{\underbrace{1,\dots1}})^{\top}$, where $\bm{0}_{p_2}$ is the zero vector with dimension $p_2$
\item In order to investigate the behaviour of the estimators, we define $\Delta^{\ast}=\left\Vert \boldsymbol{\beta -\beta }_{0}\right\Vert \ge 0$, where $\bbeta_{0}=\left( \bbeta_{1}^{\top},\boldsymbol{0}_{p_2}^{\top}\right)^{\top}$ and $\left\Vert \cdot \right\Vert $ is the Euclidean norm. To clarify this equation, one may write 
$\bbeta=(\underset{p_1}{\underbrace{1,\dots1}}, \Delta^{\ast}, \underset{p_2-1}{\underbrace{0,\dots,0}})^{\top}$ to generate response. If $\Delta^{\ast}=0$, then the null hypothesis is true, otherwise it is not

\item The number of predictor variables: $\left (p_1,p_2  \right ) \in \left \{ (5,5),(5,10),(5,15),(5,30) \right \}$
\item Each realization was repeated 1000 times to calculate the MSE of suggested estimators
\item $\alpha$ is taken as $0.05$
\end{itemize}

%\begin{table}[!htbp] 
\begin{table}[H] 
\centering 
  \caption{Simulated RMSE results when \\ ($p_1=5$, $p_2=15$ and $ n= 100$)} 
  \label{Tab1} 
  \begin{adjustbox}{width=1\textwidth}
\begin{tabular}{rrrrrrrrrrrrr}
\toprule
  &&\multicolumn{2}{c}{$\rho=0.3$} &&& \multicolumn{2}{c}{$\rho=0.6$} &&& \multicolumn{2}{c}{$\rho=0.9$} \\
\cmidrule(lr){2-5} \cmidrule(lr){6-9} \cmidrule(lr){10-13}
$\Delta^{\ast}$ & $\hbbeta_{1}^{\rm LSM}$ & $\hbbeta_{1}^{\rm LPT}$ & $\hbbeta_{1}^{\rm LS}$ & $\hbbeta_{1}^{\rm LPS}$ &
 $\hbbeta_{1}^{\rm LSM}$ & $\hbbeta_{1}^{\rm LPT}$ & $\hbbeta_{1}^{\rm LS}$ & $\hbbeta_{1}^{\rm LPS}$ &
 $\hbbeta_{1}^{\rm LSM}$ & $\hbbeta_{1}^{\rm LPT}$ & $\hbbeta_{1}^{\rm LS}$ & $\hbbeta_{1}^{\rm LPS}$ \\ 
\midrule
0.000 & 0.733 & 0.763 & 0.774 & 0.760 & 0.715 & 0.749 & 0.755 & 0.743 & 0.538 & 0.585 & 0.587 & 0.585 \\ 
  0.250 & 0.753 & 0.851 & 0.800 & 0.794 & 0.751 & 0.830 & 0.791 & 0.783 & 0.571 & 0.620 & 0.621 & 0.615 \\ 
  0.500 & 0.814 & 0.987 & 0.857 & 0.856 & 0.863 & 1.008 & 0.864 & 0.863 & 0.596 & 0.677 & 0.650 & 0.644 \\ 
  0.750 & 0.928 & 1.001 & 0.912 & 0.912 & 1.061 & 1.011 & 0.919 & 0.919 & 0.652 & 0.775 & 0.702 & 0.697 \\ 
  1.000 & 1.075 & 1.000 & 0.941 & 0.941 & 1.335 & 1.000 & 0.946 & 0.946 & 0.734 & 0.900 & 0.763 & 0.755 \\ 
  1.250 & 1.288 & 1.000 & 0.959 & 0.959 & 1.656 & 1.000 & 0.965 & 0.965 & 0.813 & 1.008 & 0.801 & 0.797 \\ 
  1.500 & 1.561 & 1.000 & 0.971 & 0.971 & 2.071 & 1.000 & 0.973 & 0.973 & 0.956 & 1.073 & 0.856 & 0.854 \\ 
  2.000 & 2.195 & 1.000 & 0.983 & 0.983 & 3.172 & 1.000 & 0.985 & 0.985 & 1.247 & 1.046 & 0.916 & 0.916 \\ 
  4.000 & 6.792 & 1.000 & 0.996 & 0.996 & 10.438 & 1.000 & 0.996 & 0.996 & 3.300 & 1.000 & 0.984 & 0.984 \\ 
\bottomrule
\end{tabular}
\end{adjustbox}
\end{table}

All computations were conducted using the statistical package \citet{R2010}.  The performance of one of the suggested estimator was evaluated by using MSE criterion. Also, the relative mean square efficiency
(RMSE) of the $\bbeta_{1}^{\blacktriangle }$ to the
 $\hbbeta_{1}^{\rm LFM}$
is indicated by%
%\begin{equation*}
%\textnormal{RMSE}\left( \hbbeta_{1}^{\rm LFM}:\bbeta%
%_{1}^{\blacktriangle }\right) =\frac{\textnormal{MSE}\left( \hbbeta%
%_{1}^{\rm LFM}\right) }{\textnormal{MSE}\left( \bbeta_{1}^{\blacktriangle
%}\right) },
%\end{equation*}
\begin{equation*}
\textnormal{RMSE}\left( \hbbeta_{1}^{\rm LFM}:\bbeta%
_{1}^{\blacktriangle }\right) =\frac{\textnormal{MSE}\left( \bbeta_{1}^{\blacktriangle}\right) }{\textnormal{MSE}\left( \hbbeta%
_{1}^{\rm LFM}\right) },
\end{equation*}
where $\bbeta_{1}^{\blacktriangle }$ is one of the listed estimators. If the RMSE of an estimators smaller than one, then it indicates superior to the full model estimator.

\begin{figure}[!htbp] 
\centering
\includegraphics[width=15cm,height=19cm]{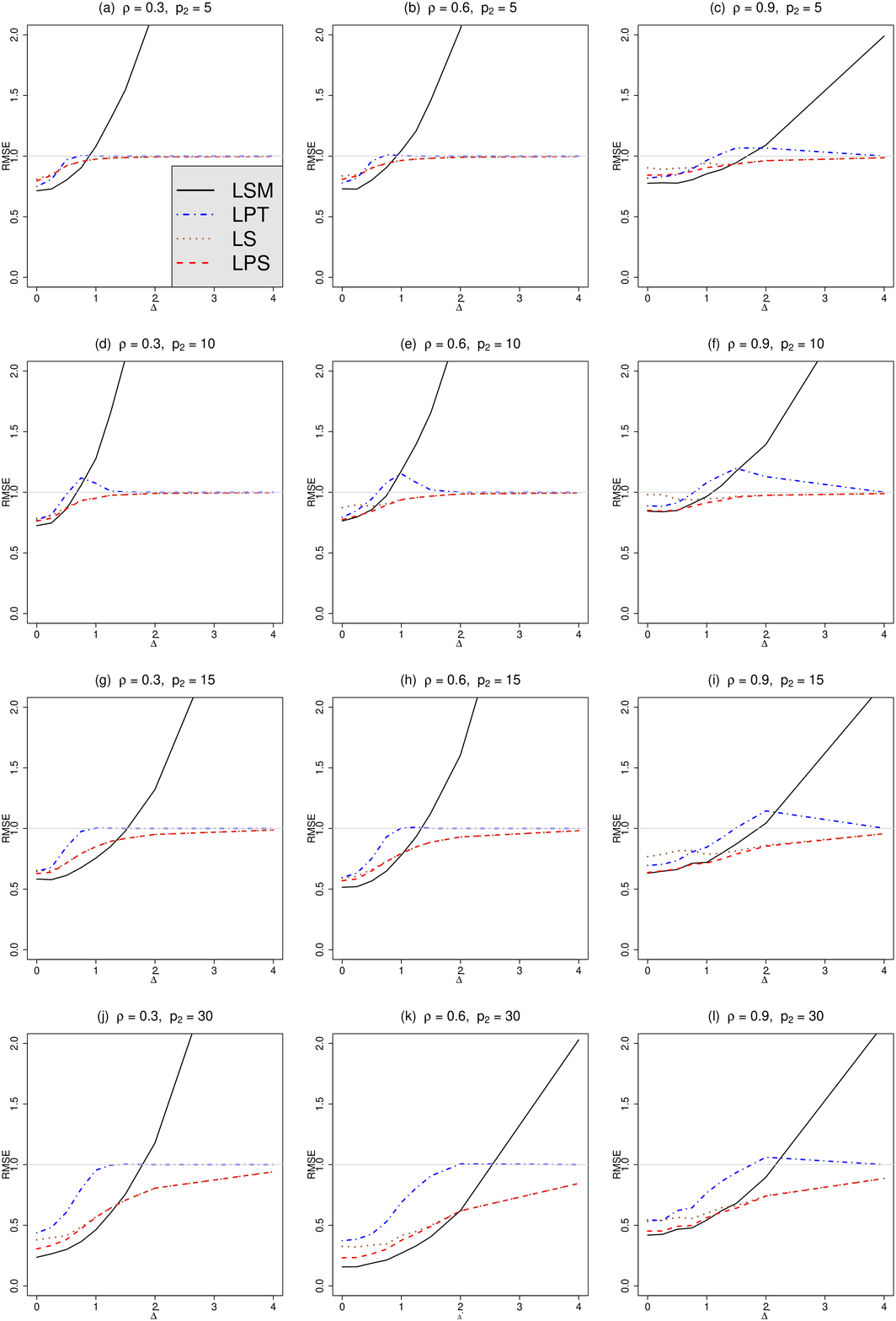}
\caption{ RMSE of the estimators as a function of the
non-centrality parameter $\Delta^{\ast}$ when $n=50$ and ${p}_1=5$.
 \label{Fig:RMSE:n50}}
\end{figure}

For the sake of brevity, we report the results for $n=100$, $p_1=5$ and $p_2=15$  with the different values of $\rho$ are shown in Table \ref{Tab1}. Furthermore, we also plotted RMSEs against $\Delta ^{\ast }$ for easier comparison in Figures \ref{Fig:RMSE:n50} and \ref{Fig:RMSE:n100}. 

\begin{figure}[!htbp] 
\centering
\includegraphics[width=15cm,height=20cm]{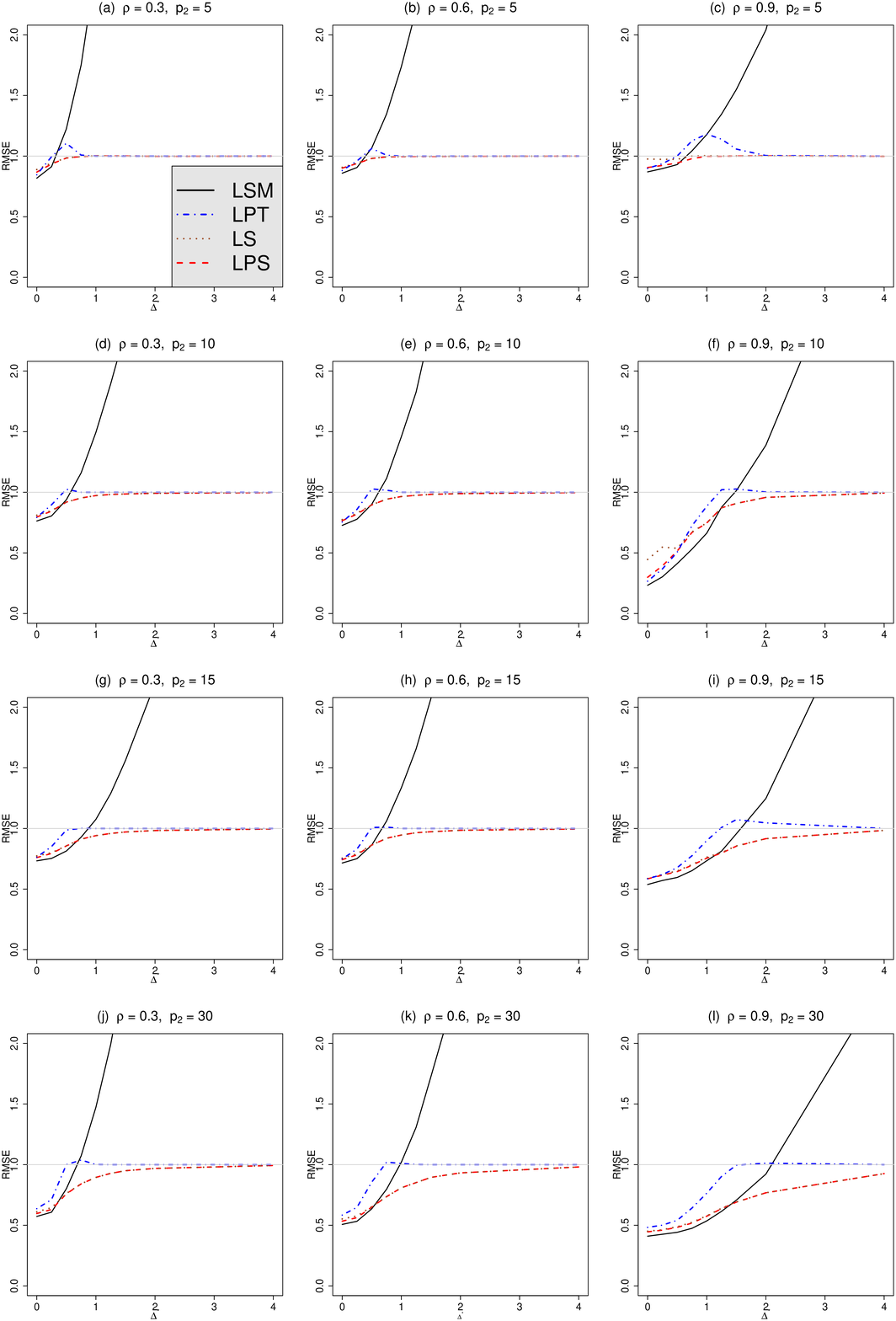}
\caption{ RMSE of the estimators as a function of the
non-centrality parameter $\Delta^{\ast}$ when $n=100$ and ${p}_1=5$.
 \label{Fig:RMSE:n100}}
\end{figure}

In summary, when $\Delta ^{\ast }=0$, i.e. the null hypothesis is true, not surprisingly the LSM is superior to all estimators, since it has the smallest RMSE. In contrast, the LSM does not perform well when the value of $\Delta ^{\ast }$ increases. Also, the RMSE of the LPT is smaller than the RMSEs of LS and LPS for small values of $\Delta ^{\ast }$. For the intermediate values $\Delta ^{\ast }$, however, the RMSE of LPT may lose its efficiency, even worse than the LFM. Finally, the larger values of $\Delta ^{\ast }$, the RMSEs of LPT approaches to one. As it can be shown that the performance of LPS always outshines LS for all values of $\Delta ^{\ast }$. Again, the RMSEs of LS and LPS goes to one for large values of $\Delta ^{\ast }$.

\subsection{Comparisons with $L_1$ estimators}

In Table \ref{T2_all}, we compare our listed estimators with  LSE and some penalty estimators, namely Ridge, Lasso, aLasso, SCAD and MCP when $\rho=0.3, 0.6, 0.9$, $n=30, 80$, $p_1=4$ and $p_2=4, 10$. According to Table \ref{T2_all},  shrinkage estimators outshine all others.

\begin{table}[!htbp] 
\caption{RMSE comparions for ${p}_1=4$.}
\label{T2_all}% title of Table
\centering
\begin{adjustbox}{width=1\textwidth}
\begin{tabular}{llrrllllllllll}
\toprule
$\rho$ &$n$& $p_2$ &
$\hbbeta_{1}^\textnormal{LSE}$ &
$\hbbeta_{1}^\textnormal{Ridge}$ &
$\hbbeta_{1}^\textnormal{LSM}$ &
$\hbbeta_{1}^\textnormal{LPT}$ &
$\hbbeta_{1}^\textnormal{LS}$ &
$\hbbeta_{1}^\textnormal{LPS}$ &
$\hbbeta_{1}^\textnormal{Lasso}$ &
$\hbbeta_{1}^\textnormal{aLasso}$ &
$\hbbeta_{1}^\textnormal{SCAD}$ &
%$\hbbeta_{1}^\textnormal{MCP}$ &
 \\
\midrule
%0.3&30&4&0.939&0.974&1.177&1.118&1.145&1.132&0.943&0.860&0.746\\
%&&10&1.012&1.175&2.880&1.790&1.998&2.052&1.391&1.551&1.156\\
%&80&4&1.035&1.045&1.178&1.158&1.077&1.114&0.999&1.044&0.989\\
%&&10&1.007&1.020&1.326&1.310&1.212&1.253&1.087&1.128&0.978\\ \\

%0.6&30&4&1.046&1.141&1.618&1.501&1.276&1.314&1.136&0.996&0.819\\
%&&10&1.024&1.366&2.424&1.586&1.853&1.893&1.628&1.521&0.841\\
%&80&4&1.032&1.051&1.330&1.318&1.182&1.185&1.088&1.088&0.891\\
%&&10&1.027&1.052&1.559&1.422&1.462&1.460&1.098&1.157&1.112\\ \\

%0.9&30&4&0.955&1.131&1.611&1.447&1.136&1.277&1.060&0,890&0.672\\
%&&10&1.068&1.822&3.564&1.777&2.107&2.222&1.656&1.544&0.946\\
%&80&4&1.015&1.055&1.378&1.251&1.169&1.188&1.033&0.855&0.618\\
%&&10&1.001&1.064&1.538&1.406&1.315&1.380&1.081&0.941&0.580\\

0.3&30&4&1.06496&1.02669&0.00085&0.00089&0.00087&0.00088&1.06045&1.16279&1.34048\\
&&10&0.00099&0.00085&0.00035&0.00056&0.00050&0.00049&0.00072&0.00064&0.86505\\
&80&4&0.00097&0.00096&0.00085&0.00086&0.00093&0.00090&1.00100&0.00096&1.01112\\
&&10&0.00099&0.00098&0.00075&0.00076&0.00083&0.00080&0.00092&0.00089&1.02249\\ \\

0.6&30&4&0.00096&0.00088&0.00062&0.00067&0.00078&0.00076&0.00088&1.00402&1.22100\\
&&10&0.00098&0.00073&0.00041&0.00063&0.00054&0.00053&0.00061&0.00066&1.18906\\
&80&4&0.00097&0.00095&0.00075&0.00076&0.00085&0.00084&0.00092&0.00092&1.12233\\
&&10&0.00097&0.00095&0.00064&0.00070&0.00068&0.00068&0.00091&0.00086&0.89928\\ \\

0.9&30&4&1.04712&0.00088&0.00062&0.00069&0.00088&0.00078&0.00094&1.12360&1.48810\\
&&10&0.00094&0.00055&0.00028&0.00056&0.00047&0.00045&0.00060&0.00065&1.05708\\
&80&4&0.00099&0.00095&0.00073&0.00080&0.00086&0.00084&0.00097&1.16959&1.61812\\
&&10&0.00100&0.00094&0.00065&0.00071&0.00076&0.00072&0.00093&1.06270&1.72414\\

\bottomrule
\end{tabular}%
\end{adjustbox}
\end{table}

\section{Application to State data}

We consider the State data set which is available by default in R. This data set is related to the 50 states of the United States of America. We list all variables in Table~\ref{Tab:variables}. We also consider the life expectancy as the response.

\begin{table}[H]
\small
\centering
\begin{tabular}{|l|l|}
\hline
\textbf{Dependent Variable} &\\
life.exp & life expectancy in years (1969--71)\\
\hline 
\textbf{Covariates}  & \\
population & population estimate as of July 1, 1975\\
income &per capita income (1974)\\
illiteracy & illiteracy (1970, percent of population)\\
murder & murder and non-negligent manslaughter \\ &rate per 100.000 population (1976)\\
hs.grad & percent high-school graduates (1970)\\
frost & mean number of days with minimum temperature  \\ &below freezing (1931--1960) in capital or large city\\
area & land area in square miles\\
\hline 
\end{tabular}
\caption{Lists and Descriptions of Variables
\label{Tab:variables}}
\end{table}

In Figure~\ref{Fig:cor_plot}, we plot the coefficients of correlation among covariates. We also show the degree of correlation with colours, and the cells which has not any colour indicate that it is not significant with $\alpha=0.05$. According to this figure, there are strong relationship among some predictors. This situation encouraged us to use our suggested estimator since they perform superiorly.

\begin{figure}[H]
\centering
\includegraphics[width=10cm,height=10cm]{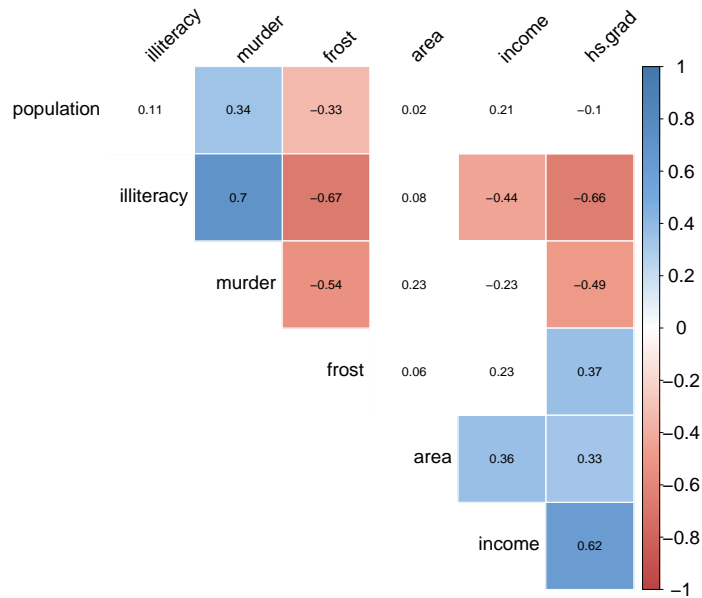}
\caption{Correlations among predictors
 \label{Fig:cor_plot}}
\end{figure}

In any application, if we do not have any prior information about covariates whether they are significantly important or not, one might do stepwise or variable selection techniques to select the best subsets. In this study, we use AIC method, we find that income, illiteracy and area variables do not significantly explain the response variable, and these covariates may be ignored. Hence, we fit the sub-model with the help of this auxiliary information, and the full and candidate sub-models are given in Table~\ref{Tab:fittings}.

\begin{table}[H]
\footnotesize
\centering
\begin{tabular}{ll}
\hline
\textbf{Models} & \textbf{Formulas} \\
\hline
Full model &  life.exp = $\beta_0+\beta_1$(population) $+\beta_2$(income)$+\beta_3$(illiteracy)$+\beta_4$(murder)$+\beta_5$(hs.grad)\\ &$+\beta_6$(frost)$+\beta_7$(area)\\
Sub-Model &  life.exp = $\beta_0+\beta_1$(population) $+\beta_4$(murder)$+\beta_5$(hs.grad)$+\beta_6$(frost)\\
\hline
\end{tabular}
\caption{Fittings of full and sub-model
\label{Tab:fittings}}
\end{table}

\begin{figure}[H]
\centering
\includegraphics[width=12cm,height=6cm]{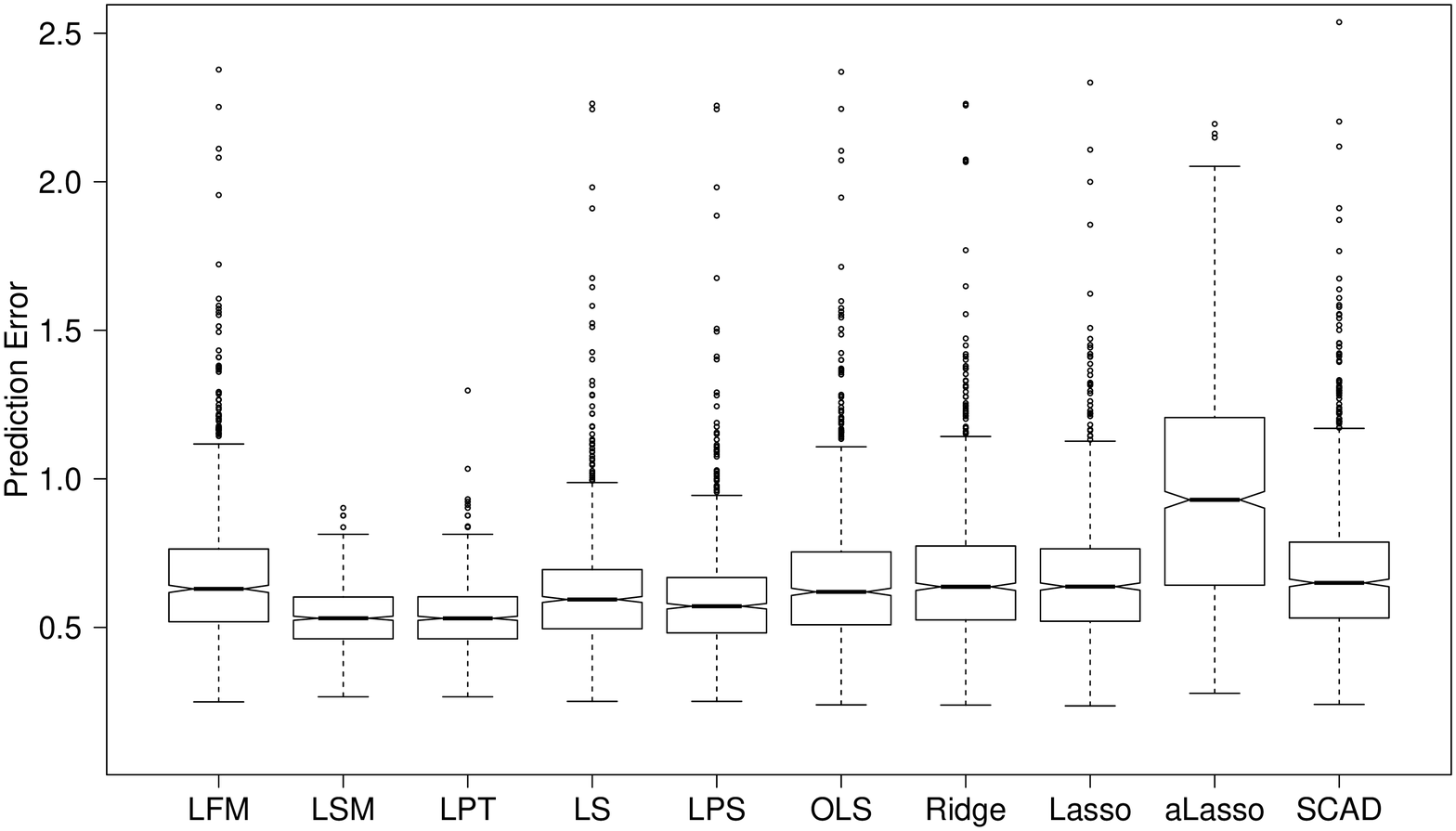}
\caption{Prediction errors of listed estimators based on bootstrap simulation
 \label{Fig:boxplot:PE}}
\end{figure}

Our results are based on bootstrap samples resampled $1000$ times. Since there is no noticeable variation for larger number of replications, we did not consider further values. The average prediction errors were calculated via 10-fold CV for each bootstrap replicate. The predictors were first standardized to have zero mean and unit standard deviation before fitting the model. To evaluate the performance of the suggested estimators, we calculate the predictive error (PE) of an estimator.  In Figure~\ref{Fig:boxplot:PE}, we plot the prediction errors versus the listed estimators.

Furthermore, we define the relative predictive error (RPE) of $\hbbeta^{\ast}$ in terms of the full model Liu regression estimator $\hbbeta^{\rm LFM}$ to ease comparison as follows
\begin{equation*}
%\text{RPE}\left(\bbeta^{\ast} \right) =\frac{\sum_{i=1}^{n}\left( \by- \bX_1\hbbeta_1^{\rm RFM} \right )^2}{\sum_{i=1}^{n}\left( \by- \bX\bbeta^{\ast} \right)^2},
\text{RPE}\left(\hbbeta^{\ast} \right) =\frac{\text{PE}(\hbbeta^{\ast})}
{\text{PE}(\hbbeta^{\rm LFM} )},
\end{equation*}%
where $\hbbeta^{\ast}$ can be any of the listed estimators. If the RPE is smaller than one, it indicates the superiority to LFM. Table~\ref{Tab:estimations} reveals the RPE of the listed estimators. According to Table~\ref{Tab:estimations}, the sub-model estimator has the smallest RPE since it is computed based on the assumption that the selected sub-model is the true model. As expected, due to the presence of multicollinearity, the performance of both Liu-type shrinkage and pretest estimators are better than the estimators based on $L_1$ criteria. Thus, the data analysis corroborates with our simulation and theoretical findings.

\begin{table}[H]
\centering
%\begin{adjustbox}{width=1\textwidth}
\begin{tabular}{rrrrrrr}
  \hline
&(intercept) & Population & Murder & HS.Grad & Frost &RPE\\ 
  \hline
LFM&70.894 & 0.202 & -1.077 & 0.335 & -0.282 & 1.000 \\ 
  &0.016 & -0.029 & 0.034 & -0.061 & 0.016 \\ 
  &0.117 & 0.153 & 0.181 & 0.223 & 0.175 \\ 
 %LSM& 70.881 & 0.234 & -1.088 & 0.385 & -0.283 &1.268\\ 
 LSM& 70.881 & 0.234 & -1.088 & 0.385 & -0.283 &0.788\\ 
  &0.003 & 0.010 & 0.020 & 0.008 & 0.026 \\ 
  &0.104 & 0.117 & 0.134 & 0.134 & 0.161 \\ 
  %LPT&70.882 & 0.232 & -1.088 & 0.383 & -0.283 &1.259\\ 
  LPT&70.882 & 0.232 & -1.088 & 0.383 & -0.283 &0.794\\ 
  &0.004 & 0.008 & 0.020 & 0.007 & 0.026 \\ 
  &0.105 & 0.118 & 0.136 & 0.136 & 0.162 \\ 
 % LS&70.889 & 0.216 & -1.091 & 0.350 & -0.286 &1.090\\ 
  LS&70.889 & 0.216 & -1.091 & 0.350 & -0.286 &0.917\\ 
  &0.011 & -0.014 & 0.021 & -0.043 & 0.013 \\ 
  &0.110 & 0.133 & 0.158 & 0.176 & 0.171 \\ 
%  LPS&70.889 & 0.215 & -1.086 & 0.353 & -0.283 &1.132\\ 
  LPS&70.889 & 0.215 & -1.086 & 0.353 & -0.283 &0.883\\ 
  &0.011 & -0.015 & 0.025 & -0.040 & 0.016 \\ 
  &0.110 & 0.132 & 0.153 & 0.169 & 0.162 \\ 
  %LSE&70.879 & 0.231 & -1.112 & 0.395 & -0.298 &0.995\\ 
  LSE&70.879 & 0.231 & -1.112 & 0.395 & -0.298 &1.005\\ 
  &0.016 & -0.029 & 0.034 & -0.061 & 0.016 \\ 
  &0.117 & 0.153 & 0.181 & 0.223 & 0.175 \\ 
  %Ridge&70.893 & 0.119 & -0.816 & 0.312 & -0.188&1.002 \\ 
  Ridge&70.893 & 0.119 & -0.816 & 0.312 & -0.188&0.998 \\ 
  &0.015 & -0.033 & 0.058 & -0.043 & 0.048 \\ 
  &0.115 & 0.124 & 0.190 & 0.145 & 0.136 \\ 
  %Lasso&70.891 & 0.128 & -0.942 & 0.281 & -0.182 &1.016\\ 
  Lasso&70.891 & 0.128 & -0.942 & 0.281 & -0.182 &0.984\\ 
  &0.012 & -0.055 & 0.090 & -0.063 & 0.051 \\ 
  &0.113 & 0.119 & 0.190 & 0.177 & 0.156 \\ 
  %aLasso&70.888 & 0.096 & -0.464 & 0.571 & -0.079 &0.722 \\ 
  aLasso&70.888 & 0.096 & -0.464 & 0.571 & -0.079 &1.385 \\ 
  &0.010 & 0.104 & -0.464 & -0.223 & -0.079 \\ 
  &0.129 & 0.167 & 0.364 & 0.233 & 0.133 \\ 
  %SCAD&70.888 & 0.141 & -1.057 & 0.259 & -0.201&0.974 \\ 
  SCAD&70.888 & 0.141 & -1.057 & 0.259 & -0.201&1.026 \\ 
  &0.009 & -0.083 & 0.051 & -0.117 & 0.107 \\ 
  &0.116 & 0.140 & 0.175 & 0.219 & 0.187 \\ 
   \hline
\end{tabular}
%\end{adjustbox}
\caption{
Estimate (first row), Bias (second row) and standard error (third row) for significant coefficients for the state data. The RPE column gives the relative efficiency based on bootstrap simulation with respect to the LFM. 
\label{Tab:estimations}}
\end{table}

\section{Conclusions}
In this paper, we combined the pre-test estimator and Stein-type estimator with the Liu regression method in order to obtain a better estimators in the linear regression model when the parameter vector $\bbeta$ is partitioned into two parts, namely, the main effects $\bbeta_1$ and the nuisance effects $\bbeta_2$ such that $\bbeta=\left(\bbeta_1, \bbeta_2 \right)$. Thus, our main interest is to estimate $\bbeta_1$ when $\bbeta_2$ is close to zero. Therefore, we conduct a Monte Carlo simulation study to evaluate the relative efficiency of the suggested estimators and also we present a real data application. According to  both the results of the simulation and real application, we conclude that our estimators have better performance than LSE, ridge and the estimators based on $L_1$ criteria.

\section*{Acknowledgement}
This research is supported by Necmettin Erbakan University, Scientific Research Projects Unit, Project No: 171215001.

\end{document}